\newtheorem{theorem}{Theorem}[section]
\newtheorem{proposition}[theorem]{Proposition}
\newtheorem{lemma}[theorem]{Lemma}
\newtheorem{corollary}[theorem]{Corollary}
\theoremstyle{definition}
\newtheorem{definition}[theorem]{Definition}
\newtheorem{bigremark}[theorem]{Remark}
\newtheorem{example}[theorem]{Example}
\newtheorem{construction}[theorem]{Construction}
\newcommand\scalemath[2]{\scalebox{#1}{\mbox{\ensuremath{\displaystyle #2}}}}
\newcounter{bean}
\newcommand{\mc}[1]{\ensuremath{\mathcal{#1}}}
\renewcommand{\Z}{\ensuremath{\mathbb{Z}}}
\renewcommand{\Q}{\ensuremath{\mathbb{Q}}}
\newcommand{\zk}{\mathcal{Z}_K}
\newcommand{\K}{K}
\newcommand{\bk}{\mathbf{k}}
\newcommand{\stab}{\mathrm{stab}}
\newcommand{\Aut}{\mathrm{Aut}}
\title{Simplicial $G$-complexes and representation stability of polyhedral products}
\author{Xin Fu}  \givenname{Xin} \surname{Fu}
\address{School of Mathematics, University of Southampton,Southampton SO17 1BJ, United Kingdom} 
\email{X.Fu@soton.ac.uk} 
\author{Jelena Grbi\'c}  \givenname{Xin} \surname{Fu}
\address{School of Mathematics, University of Southampton,Southampton SO17 1BJ, United Kingdom} 
\email{J.Grbic@soton.ac.uk} 
\begin{document}

\begin{abstract}
Representation stability in the sense of Church-Farb is concerned with stable properties of representations of sequences of algebraic structures, in particular of groups. We study this notion on objects arising in toric topology.
With a simplicial $G$-complex $K$ and a topological pair $(X, A)$, a $G$-polyhedral product $(X, A)^K$ is associated. We show that the homotopy decomposition~\cite{BBCG} of $\Sigma (X, A)^K$ is then $G$-equivariant after suspension. 
In the case of $\Sigma_m$-polyhedral products, we give criteria on simplicial $\Sigma_m$-complexes which imply representation stability of $\Sigma_m$-representations $\{H_i((X, A)^{K_m})\}$.
\end{abstract}
\maketitle

\section{Introduction}

Church and Farb~\cite{CF} introduced the theory of representation stability. The goal of representation stability is to provide a framework for generalising the classical homology stability to situations when each vector space $V_m$ has an action of the symmetric group $\Sigma_m$ (or other natural families of groups). 
We initiate the study of representation stability to toric topology.

Let $K$ be a simplicial complex on $m$ vertices. With $K$, and a topological pair $(X,A)$, a polyhedral product $(X,A)^K$ can be associated in the following way
\[
(X,A)^K=\bigcup_{I\in K}(X,A)^I
\]
where $(X, A)^{I}=\{(x_1, \ldots, x_m)\in \overset{m}{\underset{j=1}{\prod}}X\mid x_j\in A ~\mathrm{for}~j \notin I\}$.

In particular, when $(X,A)=(D^2, S^1)$, the polyhedral product $(D^2,S^1)^K$ is known as the moment-angle complex $\zk$. These objects and their topological and lately homotopy theoretical properties have been of main interest in toric topology.

If a finite group $G$ acts simplicially on a simplicial complex $K$, then that action induces a $G$-action on polyhedral products, in particular on the moment-angle complex $\zk$. Notice that by acting simplicially on a simplicial complex $K$ on $m$ vertices, $G$ is a subgroup of the symmetric group $\Sigma_m$.

In this paper we study $\Sigma_m$-representation stability of polyhedral products. We start by analysing $G$-equivariant properties of the stable homotopy decomposition of moment-angle complexes $\zk$~\cite{Ho, BP1} and polyhedral products $(X,A)^K$~\cite{BBCG}. These homotopy decompositions induce $\bk G$-module decompositions of the cohomology of moment-angle complexes and polyhedral products, respectively. Recall~\cite[Corollary 5.4]{KSB} that for a $G$-module $N\cong\oplus_{i\in I}N_i$, with the $G$-action permuting the summands of $N$ according to some $G$-action on $I$, there exists a $G$-isomorphism 
\begin{equation}\label{Brown}
N\cong \underset{i\in E}\oplus\mathrm{Ind}^G_{G_i}N_i
\end{equation}
where $E$ is a set of representatives of  orbits of $I$ and $G_i$ is the stabiliser of $i$ in $G$.

Specialising to $G=\Sigma_m$, we describe several non-trivial  constructions of families of simplicial $\Sigma_m$-complexes $\mathcal K=\{ K_m\}$ (see Constructions~\ref{fi2} and~\ref{fi3})
and describe conditions on these families which together with  decomposition~\eqref{Brown} and Hemmer's result~\cite{He} imply uniform representation stability of $\Sigma_m$-representation  of  $\{\widetilde{H}_*((X,A)^{K_m};\bk)\}$ (see Theorem~\ref{thmrrpp} and Corollary~\ref{rsconstructions}).

In the case of moment-angle complexes, we construct a sequence of $\Sigma_m$-manifolds which are uniformly representation stable although not homology stable (see Proposition~\ref{rsmanifolds}).

The uniform representation stability influences the behaviour of the Betti numbers of the $i$-th homology groups $\{ \widetilde{H}_i((X,A)^{K_m}; \Q)\}$
and we show that in this case their growth is eventually polynomial with respect to $m$ (see Theorem~\ref{Betti}).

\section{Moment-angle complexes associated with simplicial $G$-complexes}\label{sppg}

Moment-angle complexes $\zk=(D^2, S^1)^K$ are considered as spaces on which a torus $T^l$, $l\leq m $ acts. The action of the torus is induced by an $S^1$-action on $(D^2, S^1)$. Extensive literature exists on the study of this action. The problem we are studying is how symmetries of a simplicial complex $K$ influence the symmetries of the moment-angle complex $\zk$.

\subsection{$\bk G$-module structures on $H^*(\zk; \bk )$}

Let $\bk$ be a field or $\Z$, let $G$ be a finite group, and let $K$ be a simplicial $G$-complex. We shall describe $G$-actions on the moment-angle complex $\zk$ induced by a simplicial $G$-action on $K$.

A {\it $G$-complex} is a CW-complex $X$ together with a group action $G$ on it which permutes the cells. A {\it simplicial $G$-complex} is a simplicial complex $\K$ on a vertex set $[m]$ with a $G$-action on $[m]$ such that the induced action on subsets of $[m]$ preserves $\K$. Thus, the geometrical realisation of a simplicial $G$-complex $\K$ is a $G$-complex. For a simplicial $G$-complex $\K$, each chain group $C_n(\K; \bk)$ is a direct sum of copies of $\bk$, each summand corresponding to an $n$-simplex of $\K$ on which $G$ acts. Denote by $G_\sigma$ the stabiliser of $\sigma$, and let $E_n$ be a set of representatives of the $G$-orbits of $n$-simplices of $K$. Thus, by~\eqref{Brown}, 
\[
C_n(\K;\bk)\cong \underset{\sigma \in E_n} \bigoplus \mathrm{Ind} _{G_{\sigma}}^G \bk.
\]

A moment-angle complex $\zk$ can be given the following cellular decomposition. 
The disc $D^2$ has three cells $e^0, e^1$ and $e^2$ of dimensions 0, 1 and 2, respectively. The cells of $D^{2m}\cong (D^2)^m$ are parametrised by subsets $I, L \subseteq [m]$ with $I \cap L= \emptyset$, so that a cell denoted by $\kappa(L,I)$ is equal to $e_1 \times \ldots \times e_m$ in $D^{2m}$, where $e_i$ is the $2$-dimensional cell $e^2$ if $i \in I$, $e_i$ is the $1$-dimensional cell $e^1$ if $i \in L$, and $e_i$ is the point $e^0$ if $i \in [m]\setminus (I \cup L)$. Since $\zk$ is a subcomplex of $D^{2m}$ determined by the simplicial complex $K$, the cells of $\zk$ are those cells $\kappa(L,I)$, where $I \in \K$.

We start by showing that if $\K$ is a simplicial $G$-complex, the corresponding moment-angle complex $\zk$ is a $G$-complex. Let $2^{[m]}$ be the power set of $[m]$. Then the $G$-action on $\K$ can be extended to an action $\Phi$ on $2^{[m]}$. Specifically, $\Phi\colon G \times 2^{[m]} \longrightarrow 2^{[m]}$ is given by $\Phi(g, \{i_1, \ldots, i_l\})=\{g\cdot i_1, \ldots, g\cdot i_l\}$, where $g \in G$ and $\{i_1, \ldots, i_l\} \subset [m]$.

The simplicial $G$-action on $K$ induces a $G$-action on $\zk$, $\rho\, \colon G\times\zk\longrightarrow\zk$, through homeomorphisms of $\zk$ given by 
\begin{equation}\label{gaction}
\rho_g\cdot(z_1, \ldots, z_m)=(z_{g\cdot 1}, \ldots, z_{g\cdot m}).
\end{equation}

\begin{lemma}
For a simplicial $G$-complex $\K$, the moment-angle complex $\zk$ is a $G$-complex.
\end{lemma}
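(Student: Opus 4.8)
The plan is to verify directly that the $G$-action $\rho$ defined in~\eqref{gaction} permutes the cells $\kappa(L,I)$ of $\zk$, which is exactly what is needed to conclude that $\zk$ is a $G$-complex in the sense defined above. First I would recall that $\zk \subseteq D^{2m}$ is the subcomplex consisting of those cells $\kappa(L,I)$ with $I \in \K$ (and $I \cap L = \emptyset$), and that $\rho_g$ is the restriction to $\zk$ of the coordinate-permuting homeomorphism of $D^{2m} \cong (D^2)^m$ sending $(z_1,\dots,z_m) \mapsto (z_{g\cdot 1},\dots,z_{g\cdot m})$. Since this map of $(D^2)^m$ is a product of coordinate permutations, it is cellular on $D^{2m}$ and carries the cell $\kappa(L,I)$ homeomorphically onto the cell $\kappa(g\cdot L, g\cdot I)$, where $g\cdot L$ and $g\cdot I$ are the images under the extended action $\Phi$; one checks this by tracking which factor $e_i \in \{e^0,e^1,e^2\}$ lands in which slot after reindexing.

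The key step is then to observe that $\kappa(g\cdot L, g\cdot I)$ is again a cell of $\zk$: indeed $(g\cdot I) \cap (g\cdot L) = \Phi(g, I\cap L) = \emptyset$ since $\Phi$ is an action by bijections of $2^{[m]}$, and $g\cdot I = \Phi(g,I) \in \K$ because $\K$ is a simplicial $G$-complex, i.e. the induced action on subsets of $[m]$ preserves $\K$. Hence $\rho_g$ maps cells of $\zk$ to cells of $\zk$. Applying this to $g^{-1}$ as well shows $\rho_g$ restricts to a bijection on the cells, so $\rho_g \colon \zk \to \zk$ is a well-defined homeomorphism permuting the cells, and $\rho_{gh} = \rho_g \circ \rho_h$, $\rho_e = \mathrm{id}$ follow from the corresponding identities for $\Phi$ on indices. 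Therefore $\rho$ is a $G$-action on $\zk$ by cellular homeomorphisms, which is precisely the definition of a $G$-complex.

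There is essentially no serious obstacle here; the statement is a bookkeeping verification. The only point requiring a little care is the compatibility of the reindexing with the cell-type data $(L,I)$ — that is, confirming that permuting coordinates does not mix cells of different dimensions and sends $\kappa(L,I)$ exactly to $\kappa(g\cdot L, g\cdot I)$ rather than to some other cell — and the fact that one must use $\K$ being preserved under $\Phi$ (not merely under $G$ on vertices) to stay inside $\zk$. Both are immediate from the definitions recalled in the paragraphs preceding the lemma.
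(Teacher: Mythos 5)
Your proposal is correct and follows essentially the same route as the paper's proof: one checks that $\rho_g$ sends the cell $\kappa(L,I)$ to $\kappa(g\cdot L, g\cdot I)$, which again lies in $\zk$ because the simplicial $G$-action preserves $\K$. Your version merely spells out the bookkeeping (disjointness of $g\cdot L$ and $g\cdot I$, bijectivity via $g^{-1}$, and the action axioms) that the paper leaves implicit.
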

\begin{proof}
A cell $\kappa(L,I)$, $I \in \K$ of $\zk$ is mapped by $g\in G$ to $g\cdot\kappa(L,I)=\kappa(g\cdot L, g\cdot I)$ which is again a cell of $\zk$ as a simplicial $G$-action maps simplices to simplices and non-simplices to non-simplices. Thus, $\zk$ is a $G$-complex. 
\end{proof}

Geometralising the famous Hochster decomposition~\cite{Ho}, Buchstaber and Panov~\cite{BP1, Taras} together with Baskakov~\cite{baskakov} showed that $H^*(\mathcal{Z}_{K}; \bk )\cong \underset{J\subseteq [m]}\bigoplus\widetilde{H}^*(K_J; \bk)$ as $\bk$-algebras, where $K_J$ is  the full subcomplex of $K$ on $J$ defined by $K_J=\{\sigma\cap J\mid \sigma\in K\}$. We aim to show that this is a $\bk G$-algebra isomorphism.

\begin{lemma}\label{subcomplex}
Let $\K$ be a simplicial $G$-complex on $[m]$. Then for any subset $J \subseteq [m]$ and $g \in G$, the set $g\cdot\K_J=\{g\cdot\sigma\mid \sigma \in \K_J\}$ is the full subcomplex $\K_{g\cdot J}$. \end{lemma}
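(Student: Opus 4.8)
The plan is to unwind both sides using the defining formula $K_J=\{\sigma\cap J\mid \sigma\in K\}$ and to reduce everything to one elementary observation: the permutation action of $g$ on subsets of $[m]$ commutes with intersection.

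First I would recall that the $G$-action on $2^{[m]}$ is the one induced by the action on the vertex set, namely $\Phi(g,\{i_1,\dots,i_l\})=\{g\cdot i_1,\dots,g\cdot i_l\}$. Since $g$ acts as a bijection of $[m]$, for any two subsets $S,T\subseteq[m]$ we have $i\in S\cap T$ if and only if $g\cdot i\in (g\cdot S)\cap(g\cdot T)$, and hence $g\cdot(S\cap T)=(g\cdot S)\cap(g\cdot T)$.

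Next I would apply this with $S=\sigma$ a simplex of $\K$ and $T=J$: every element of $g\cdot \K_J$ has the form $g\cdot(\sigma\cap J)=(g\cdot\sigma)\cap(g\cdot J)$ for some $\sigma\in \K$. Because $\K$ is a simplicial $G$-complex, $\sigma\mapsto g\cdot\sigma$ is a bijection of the set of simplices of $\K$ onto itself, so as $\sigma$ ranges over $\K$ the element $\tau:=g\cdot\sigma$ also ranges over all of $\K$. Therefore
\[
g\cdot \K_J=\{(g\cdot\sigma)\cap(g\cdot J)\mid \sigma\in \K\}=\{\tau\cap(g\cdot J)\mid \tau\in \K\}=\K_{g\cdot J},
\]
which is exactly the claim.

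There is no real obstacle here; the only points that need care are that the action used on subsets is the induced one (so that it distributes over intersection) and that $g$ genuinely permutes the simplices of $\K$, which is the definition of a simplicial $G$-complex. I would also remark, since this is the point of the lemma, that it says the assignment $J\mapsto \K_J$ is $G$-equivariant, and it is this equivariance that will later let us promote the Hochster--Baskakov--Buchstaber--Panov decomposition $H^*(\zk;\bk)\cong\bigoplus_{J\subseteq[m]}\widetilde H^*(\K_J;\bk)$ to an isomorphism of $\bk G$-modules.
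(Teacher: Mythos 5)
Your proof is correct and follows essentially the same route as the paper, whose key step is the identity $g\cdot\K_J=g\cdot(\K\cap J)=g\cdot\K\cap g\cdot J=\K\cap g\cdot J=\K_{g\cdot J}$ — i.e.\ exactly your observation that the induced action on subsets commutes with intersection and that $g$ permutes the simplices of $\K$. Your write-up merely spells this out more carefully (the paper's extra first step, checking that $g\cdot\K_J$ is a subcomplex, is not needed once the set-level identity is established), so no changes are required.
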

\begin{proof}
Since $\K_J$ is a subcomplex of $\K$, every subset $\tau$ of $\sigma$ is in $\K_J$ if $\sigma \in \K_J$. Hence for $\sigma\in K_J$, every subset $\tau'$ of $g\cdot\sigma$ is $g\cdot\tau$ for some $\tau\leq\sigma$ and therefore is in $g\cdot\K_J$. Thus $g\cdot\K_J$ is a subcomplex of $\K$. 

To check that $g\cdot\K_J$ is the full subcomplex $\K_{g\cdot J}$, we observe that $g\cdot\K_J=g\cdot(\K \cap J)=g\cdot\K \cap g\cdot J =\K \cap g\cdot J=\K_{g\cdot J}$.
\end{proof}

Denote by $\{i_0, \ldots, i_p\}$ an unoriented simplex in $\K$ and by $[i_0, \ldots, i_p]$ an oriented simplex in $\K$. For an oriented $p$-simplex $\sigma=[i_0, \ldots, i_p]$, let $\sigma^*=[i_0, \ldots, i_p]^*$ denote the basis cochain in $C^p(\K; \bk)$.

Next, we show that a simplicial $G$-action on $\K$ induces a $G$-action on $\underset{J\subseteq [m]}\bigoplus\widetilde{H}^*(K_J; \bk )$.

\begin{lemma}\label{mainlemma}
Let $\K$ be a simplicial $G$-complex. For every $g\in G$ and $J\subseteq [m]$,
\[
g\cdot\widetilde{H}^*(\K_J; \bk ) =\widetilde{H}^*(\K_{g\cdot J}; \bk ).
\]
\end{lemma}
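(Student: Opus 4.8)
The plan is to show that the chain-level $G$-action on $\K$ induces, for each fixed $g$, a chain isomorphism $C^*(\K_J;\bk)\to C^*(\K_{g\cdot J};\bk)$ that descends to reduced cohomology. Concretely, for an oriented $p$-simplex $\sigma=[i_0,\ldots,i_p]$ of $\K_J$, set $g\cdot\sigma=[g\cdot i_0,\ldots,g\cdot i_p]$; by Lemma~\ref{subcomplex} this is an oriented simplex of $\K_{g\cdot J}$, and $g$ permutes the simplices of $\K_J$ bijectively onto those of $\K_{g\cdot J}$ (with inverse given by $g^{-1}$). Dualising, define $g$ on cochains by sending the basis cochain $\sigma^*$ to $(g\cdot\sigma)^*$ and extending $\bk$-linearly. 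First I would check this is a cochain map, i.e. it commutes with the coboundary $\delta$: this is the routine verification that the simplicial coboundary formula $\delta(\sigma^*)=\sum \pm\tau^*$, summed over simplices $\tau$ having $\sigma$ as a codimension-one face, is preserved under the face-incidence-preserving, orientation-preserving relabelling by $g$ (the signs match because $g$ acts on each simplex by an order-preserving relabelling of its vertex set, hence by an even, indeed trivial, permutation of the ordered list). The same relabelling is a bijection on the augmented complexes, so it preserves the augmentation and hence passes to reduced cochain complexes $\widetilde{C}^*$.

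Having a cochain isomorphism $\widetilde{C}^*(\K_J;\bk)\xrightarrow{\ \cong\ }\widetilde{C}^*(\K_{g\cdot J};\bk)$, passing to cohomology gives the isomorphism $g_*\colon \widetilde{H}^*(\K_J;\bk)\xrightarrow{\ \cong\ }\widetilde{H}^*(\K_{g\cdot J};\bk)$, which is the assertion $g\cdot\widetilde{H}^*(\K_J;\bk)=\widetilde{H}^*(\K_{g\cdot J};\bk)$. I would also note the functoriality/compatibility $(gh)_*=g_*h_*$ and $(\mathrm{id})_*=\mathrm{id}$, so that letting $J$ range over all subsets of $[m]$ these maps assemble into a genuine $G$-action on $\bigoplus_{J\subseteq[m]}\widetilde{H}^*(\K_J;\bk)$ permuting the summands according to the $\Phi$-action of $G$ on $2^{[m]}$; this is the point of the lemma within the larger argument, since it lets one invoke~\eqref{Brown}.

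I do not expect a genuine obstacle here — the statement is essentially the naturality of (reduced simplicial) cohomology applied to the simplicial isomorphism $\K_J\cong\K_{g\cdot J}$ furnished by Lemma~\ref{subcomplex}. The only point requiring a little care is keeping the orientation conventions straight: one must fix once and for all an ordering convention on vertices used to orient simplices and to define the basis cochains $\sigma^*$, and check that $g$ acts on oriented simplices without introducing sign discrepancies (equivalently, work with the standard simplicial cochain complex where $\delta$ is given by alternating sums over vertex insertions, and observe that $g$ intertwines these because it is a bijection $[m]\to[m]$ that, restricted to any simplex, is determined by where it sends vertices and carries the chosen ordering of $\sigma$ to the chosen ordering of $g\cdot\sigma$). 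Once that bookkeeping is in place, the proof is a one-line appeal to functoriality.
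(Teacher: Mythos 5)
Your route is the same as the paper's: use Lemma~\ref{subcomplex} to see that $g$ carries simplices of $\K_J$ bijectively to simplices of $\K_{g\cdot J}$, define the induced map on basis cochains, check it commutes with the coboundary, and pass to cohomology. The one genuine flaw is your sign argument. It is simply false that $g$, restricted to the vertex set of a simplex, is order-preserving (``an even, indeed trivial, permutation of the ordered list''), or that it ``carries the chosen ordering of $\sigma$ to the chosen ordering of $g\cdot\sigma$'': with the increasing-order convention and $g=(1\,3)$, the oriented simplex $[1,2]$ goes to $[3,2]=-[2,3]$. Consequently, the unsigned assignment $\sigma^*\mapsto(\text{sorted }g\cdot\sigma)^*$ is not a cochain map; e.g.\ for $g=(1\,3)$ one has $d[1]^*=-[1,2]^*-[1,3]^*$, which maps to $-[2,3]^*-[1,3]^*$, whereas $d[3]^*=[1,3]^*+[2,3]^*$, so the two sides of the required square differ by a sign. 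This is exactly why the paper defines $g\cdot\sigma^*=\epsilon(g,\sigma)(g\cdot\sigma)^*$, with $\epsilon(g,\sigma)$ the sign of the permutation putting $g\cdot i_0,\ldots,g\cdot i_p$ into increasing order.

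The repair costs nothing structurally and recovers the paper's proof: interpret $g\cdot\sigma=[g\cdot i_0,\ldots,g\cdot i_p]$ genuinely as an oriented simplex (so reordering contributes the sign of the permutation, which is precisely $\epsilon(g,\sigma)$), and verify commutation with $d$ in the oriented complex. The correct reason the signs match is not order-preservation but the fact that the relabelling $i\mapsto g\cdot i$ commutes with deleting the $j$-th entry of an ordered tuple, the sign $(-1)^j$ being attached to the position rather than to the vertex label. With that correction, your passage to reduced cohomology and the observation that $(gh)_*=g_*h_*$, so that the maps assemble into a $G$-action on $\bigoplus_{J\subseteq[m]}\widetilde{H}^*(\K_J;\bk)$ permuting summands according to the action on $2^{[m]}$, is exactly how the lemma is used in the paper.
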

\begin{proof}
Let $\sigma=[i_0, \ldots, i_{p}]$ be an oriented simplex in $\K_J$ and $\sigma^*$ be the corresponding base cochain in $C^p({\K_J};\bk)$. Since $g$ gives a bijection between the basis of $C^*(\K_J; \bk)$ and the basis of $C^*(\K_{g\cdot J}; \bk)$ by $\sigma^* \mapsto g\cdot\sigma^*=\epsilon(g, \sigma)(g\cdot\sigma)^*$, the cochain complex $C^*(\K_J; \bk)$ is isomorphic to $C^*(\K_{g\cdot J}; \bk)$ as abelian groups. As the coboundary operator $d$ is given by
\[ 
d \sigma^*=\sum\varepsilon_j \tau_j^*
\]
where the summation of the coboundary operator extends over all $(p+1)$-simplices $\tau_j$ having $\sigma$ as a face, and $\varepsilon_j=\pm 1$ is the sign with which $\sigma$ appears in the expression for $\partial \tau$, 
we obtain the commutative diagram
\[
\xymatrix{
C^*(\K_J;\bk )\ar[r]^{\cong} \ar[d]^{d}	&C^*(\K_{g\cdot J};\bk )\ar[d]^{d}\\
C^*(\K_J;\bk )\ar[r]^{\cong}	& C^*(\K_{g\cdot J};\bk ).
}
\]
Therefore $g$ induces an isomorphism between $\widetilde{H}^*(\K_J; \bk )$ and $\widetilde{H}^*(\K_{g\cdot J}; \bk )$.
\end{proof}


We continue by showing that the $G$-actions on $H^*(\zk; \bk)$ and $\underset{J\subseteq [m]}\bigoplus\widetilde{H}^*(K_J; \bk)$ are compatible.

On $\mc{C}^*(\zk; \bk)$ a multigrading can be defined. Consider a subset $J\subseteq [m]$ as a vector in $\mathbb{N}^m$ whose $j$-th coordinate is $1$ if $j\in J$, or is $0$ if $j\notin J$. Define a $\Z \oplus \mathbb{N}^m$-grading on $\mc{C}^*(\zk; \bk)$ as 
\[
\mc{C}^*(\zk; \bk)= \underset{J\subseteq [m]}{\bigoplus} \mc{C}^{*, 2J}(\zk; \bk)
\]
where $\mc{C}^{*, 2J}(\zk; \bk)$ is the subcomplex spanned by cochains $\kappa(J\setminus I, I)^*$ with $I\subseteq J$ and $I\in \K$ whose multidegree is $\mathrm{mg} \kappa(J\setminus I, I)^*=(-|J\setminus I|, J)$.

Buchstaber and Panov~\cite[Theorem 3.2.9]{BP2} showed that there are isomorphisms between $\widetilde{H}^{p-1}(\K_J; \bk)$ and $H^{p-|J|, 2J}(\zk; \bk)$ which are functorial with respect to simplicial maps and are induced by the cochain isomorphisms $f_J \colon C^{p-1}(\K_J; \bk) \to \mc{C}^{p-|J|, 2J}(\zk; \bk)$ given by
\begin{equation}\label{fJ}
f_J(\sigma^*)=\epsilon(\sigma, J)\kappa(J\setminus \sigma, \sigma)^*
\end{equation}
where $\sigma\in\K_J$ and $\epsilon(\sigma, J)=\underset{j\in \sigma}{\prod} \epsilon(j, J)$ with $\epsilon(j, J)=(-1)^{r-1}$ if $j$ is the $r$-th element of $J$.


The functorial property induces a commutative diagram  
\[
\xymatrix{
C^{p-1}(\K_J; \bk)\ar[r]^-{f_J} \ar[d]^{g}	&\mc{C}^{p-|J|, 2J}(\zk; \bk)\ar[d]^{g}\\
C^{p-1}(\K_{g\cdot J}; \bk)\ar[r]^-{f_{g\cdot J}}	& \mc{C}^{p-|g\cdot J|, 2g\cdot J}(\zk; \bk)
}
\]
 implying the following statement.

\begin{lemma} \label{gcochainiso}
If $\K$ is a simplicial $G$-complex, then $\mathcal{C}^*(\zk; \bk)$ is multigraded isomorphic to $\underset{J\subseteq [m]}{\bigoplus} C^{*}(\K_J; \bk)$
as $\bk G$-modules.
\end{lemma}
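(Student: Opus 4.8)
The plan is to assemble the statement from the pieces already in place, namely the cochain isomorphisms $f_J$ of~\eqref{fJ}, the multigrading on $\mc C^*(\zk;\bk)$, and the $G$-action on full subcomplexes from Lemmas~\ref{subcomplex} and~\ref{mainlemma}. First I would recall that, ignoring the group action, the direct sum $\bigoplus_{J\subseteq[m]} f_J$ is already known (Buchstaber--Panov) to be an isomorphism of multigraded cochain complexes
\[
f=\bigoplus_{J\subseteq[m]} f_J\colon \bigoplus_{J\subseteq[m]} C^*(\K_J;\bk)\;\xrightarrow{\ \cong\ }\;\mc C^*(\zk;\bk),
\]
carrying the $J$-summand on the left to the multidegree-$2J$ summand on the right. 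So the only thing left to check is that $f$ is $G$-equivariant, where $G$ acts on the left by permuting summands via $J\mapsto g\cdot J$ together with the cochain isomorphisms $C^*(\K_J;\bk)\to C^*(\K_{g\cdot J};\bk)$ of Lemma~\ref{mainlemma}, and on the right by the action~\eqref{gaction} induced on cochains.

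The key computation is then just the commuting square displayed immediately before the statement: for a basis cochain $\sigma^*$ with $\sigma\in\K_J$, one checks that $g\cdot f_J(\sigma^*)$ and $f_{g\cdot J}(g\cdot\sigma^*)$ agree in $\mc C^{p-|g\cdot J|,2g\cdot J}(\zk;\bk)$. Unwinding the definitions, $f_J(\sigma^*)=\epsilon(\sigma,J)\,\kappa(J\setminus\sigma,\sigma)^*$, and $g$ sends the cell $\kappa(J\setminus\sigma,\sigma)$ to $\kappa(g\cdot(J\setminus\sigma),g\cdot\sigma)=\kappa((g\cdot J)\setminus(g\cdot\sigma),g\cdot\sigma)$, so both sides are a sign times $\kappa((g\cdot J)\setminus(g\cdot\sigma),g\cdot\sigma)^*$. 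The content of the verification is a sign bookkeeping: the sign $\epsilon(g,\sigma)$ picked up by reorienting $\sigma$ under $g$ must match the discrepancy between $\epsilon(\sigma,J)$ and $\epsilon(g\cdot\sigma,g\cdot J)$, together with whatever sign $g$ introduces on the cell cochain $\kappa(J\setminus\sigma,\sigma)^*$. Since $g$ acts by a coordinate permutation of $(D^2)^m$ and $\epsilon(j,J)=(-1)^{r-1}$ depends only on the position $r$ of $j$ inside $J$ — which is the same as the position of $g\cdot j$ inside $g\cdot J$ — these signs are designed to cancel; I would check this on products of the standard $D^2$-cells and conclude the square commutes on basis elements, hence on all of $C^{p-1}(\K_J;\bk)$.

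Finally, since $f$ is already an isomorphism of multigraded cochain complexes and is now shown to be $G$-equivariant, it is an isomorphism of multigraded $\bk G$-modules, and Lemma~\ref{gcochainiso} follows. I expect the only genuine obstacle to be the sign analysis in the commuting square, and even that is routine given that $\epsilon(j,J)$ is position-dependent and $g$ permutes $J$ and $g\cdot J$ order-preservingly in the obvious bijection; the rest is bookkeeping already encapsulated in Lemmas~\ref{subcomplex}, \ref{mainlemma} and the functoriality of the $f_J$. One should also remark that the $G$-action on the left-hand sum is exactly of the form~\eqref{Brown} (permuting the summands $C^*(\K_J;\bk)$ according to the $G$-action on the power set $2^{[m]}$), which is what makes the later application of the induction formula possible.
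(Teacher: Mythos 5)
Your overall route is the same as the paper's: everything reduces to the commutativity of the square relating $f_J$ and $f_{g\cdot J}$, which the paper extracts from the asserted functoriality of the Buchstaber--Panov cochain equivalences~\eqref{fJ}, and which you propose instead to verify directly on basis cochains. The gap is in the sign verification, which you declare routine for a reason that is false: for a general $g\in G$ the bijection $j\mapsto g\cdot j$ from $J$ to $g\cdot J$ is \emph{not} order-preserving, so the position of $j$ in $J$ need not equal the position of $g\cdot j$ in $g\cdot J$, and $\epsilon(j,J)\neq\epsilon(g\cdot j,g\cdot J)$ in general. Already $g=(1\,2)$, $J=\{1,2\}$, $j=1$ is a counterexample: $\epsilon(1,J)=1$ while $\epsilon(g\cdot 1,g\cdot J)=\epsilon(2,\{1,2\})=-1$. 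Since your cancellation argument rests entirely on this, the step you defer as bookkeeping is exactly the content of the lemma.

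Moreover, if one carries the check out honestly, with the action of Lemma~\ref{mainlemma} on the simplicial side and the orientation signs of the coordinate permutation on the product cells $\kappa(J\setminus\sigma,\sigma)$ on the cellular side, the composites $g\circ f_J$ and $f_{g\cdot J}\circ g$ differ by the sign of the permutation that $g$ induces from $J$ to $g\cdot J$, so the square does not commute for all $g$ and $J$. A concrete test: let $\K$ be two disjoint points and $G=\Sigma_2$. Then $\zk=S^3\subset D^2\times D^2$, and the coordinate swap is complex-linear on $\mathbb{C}^2$, hence orientation-preserving, hence acts trivially on $H^3(\zk;\bk)$; but on $\widetilde{H}^0(\K_{\{1,2\}};\bk)$ the action of Lemma~\ref{mainlemma} sends $[1^*]$ to $[2^*]=-[1^*]$, i.e.\ is multiplication by $-1$, so the two cohomology representations are the trivial and the sign representation respectively. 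Consequently no amount of basis-level sign matching can make your map equivariant for these two actions: either the $G$-action on $\underset{J\subseteq[m]}{\bigoplus}C^{*}(\K_J;\bk)$ must be twisted by the sign of $g|_J$, or one must scrutinise exactly what the cited functoriality of the $f_J$ gives for a vertex relabelling (this is also the delicate point in the paper's own one-line argument). In short, your strategy mirrors the paper's, but the sign analysis you wave through is precisely where the real work lies, and as justified it fails.
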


Passing to cohomology, we obtain the following corollary.

\begin{corollary}\label{galgeiso}
For a simplicial $G$-complex, $H^*(\zk; \bk )$ is isomorphic to $\underset{J\subseteq [m]}\bigoplus\widetilde{H}^*(K_J;\bk )$ as $\bk G$-algebras.
\end{corollary}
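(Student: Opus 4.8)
The plan is to assemble the corollary from the three lemmas and the cochain-level statement that precede it, so that essentially no new argument is needed beyond combining what has been established. The starting point is Lemma~\ref{gcochainiso}, which gives a multigraded $\bk G$-module isomorphism $\mc{C}^*(\zk;\bk)\cong\bigoplus_{J\subseteq[m]}C^*(\K_J;\bk)$, realised summand-wise by the maps $f_J$ of~\eqref{fJ}. First I would pass to cohomology: since the $f_J$ are cochain isomorphisms (they intertwine the differentials, by Buchstaber--Panov~\cite[Theorem 3.2.9]{BP2}), they induce isomorphisms $\widetilde{H}^{p-1}(\K_J;\bk)\xrightarrow{\cong} H^{p-|J|,2J}(\zk;\bk)$, and taking the direct sum over all $J$ together with the standard Hochster/Baskakov--Buchstaber--Panov additive splitting $H^*(\zk;\bk)\cong\bigoplus_{J\subseteq[m]}\widetilde{H}^*(\K_J;\bk)$ yields an additive isomorphism.

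Next I would check $\bk G$-equivariance. The commutative diagram displayed just before Lemma~\ref{gcochainiso} shows that the square relating $f_J$, $f_{g\cdot J}$ and the two copies of the $g$-action commutes on cochains; since the vertical $g$-maps commute with the differentials (this is exactly the content of Lemma~\ref{mainlemma} on the $\K_J$ side and of the cellular $G$-action~\eqref{gaction} on the $\zk$ side), the induced square on cohomology commutes as well. Summing over a full set $J\subseteq[m]$ — on which $G$ permutes the summands by the extended action $\Phi$, with $g$ carrying the $J$-summand isomorphically onto the $g\cdot J$-summand by Lemmas~\ref{subcomplex} and~\ref{mainlemma} — gives that the additive isomorphism $H^*(\zk;\bk)\cong\bigoplus_{J\subseteq[m]}\widetilde{H}^*(\K_J;\bk)$ is $G$-equivariant, i.e.\ an isomorphism of $\bk G$-modules.

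Finally I would upgrade "module" to "algebra". The cup product structure on $H^*(\zk;\bk)$ matches, under the Hochster-type splitting, the product on $\bigoplus_J\widetilde{H}^*(\K_J;\bk)$ given on components by the maps $\widetilde{H}^*(\K_{J_1};\bk)\otimes\widetilde{H}^*(\K_{J_2};\bk)\to\widetilde{H}^*(\K_{J_1\sqcup J_2};\bk)$ induced by restriction along inclusions of full subcomplexes (zero unless $J_1\cap J_2=\emptyset$); this is the Baskakov--Buchstaber--Panov ring isomorphism. Since the $G$-action permutes full subcomplexes according to $\Phi$ and, by Lemma~\ref{subcomplex}, respects the combinatorics of $J_1\sqcup J_2\mapsto g\cdot J_1\sqcup g\cdot J_2$, and since on each full subcomplex $g$ acts by a simplicial (hence cup-product-preserving) map by Lemma~\ref{mainlemma}, the product on the right-hand side is $G$-equivariant, and the isomorphism constructed above is multiplicative. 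I expect the main (minor) obstacle to be purely bookkeeping: verifying that the sign conventions $\epsilon(\sigma,J)$ and $\epsilon(g,\sigma)$ appearing in $f_J$ and in the cochain-level $g$-action are compatible across the commuting squares — but this is already implicit in the diagram preceding Lemma~\ref{gcochainiso}, so at the level of cohomology nothing new is required, and the proof is a short assembly of the preceding results.
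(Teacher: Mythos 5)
Your proposal is correct and takes essentially the same route as the paper: the cochain isomorphisms $f_J$ of Lemma~\ref{gcochainiso} induce the Baskakov--Buchstaber--Panov ring isomorphism, and the relation $f_{g\cdot J}\circ g = g\circ f_J$ (the diagram preceding that lemma) makes it $G$-equivariant, hence a $\bk G$-algebra isomorphism. The only cosmetic difference is that the paper simply cites \cite[Theorem 4.5.8]{BP2} for multiplicativity (the product being induced by the inclusions $K_{I\cup J}\to K_I\ast K_J$), where you additionally spell out why the $G$-action permutes these product maps compatibly.
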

\begin{proof}
By \cite[Theorem 4.5.8]{BP2}, the multiplication on $\underset{J\subseteq [m]}\bigoplus\widetilde{H}^*(K_J;\bk )$ is given by 
\[
H^i(K_I; \bk)\otimes H^j(K_J; \bk) \to H^{i+j}(K_{I\cup J};\bk)
\]
which is induced by the simplicial inclusions  $K_{I\cup J}\to K_I \ast K_J$ for $I\cap J=\emptyset$ and zero otherwise.
Under this multiplication, the maps $f_J$ induce a $\bk$-algebraic isomorphism $\underset{J\subseteq [m]}\bigoplus\widetilde{H}^*(K_J;\bk) \to H^*(\zk; \bk)$. Since $f_{g\cdot J}\circ g=g\circ f_J$, the maps $f_J$ induce a $\bk G$-algebraic isomorphism.
\end{proof}

Now we state the main result of this section.

\begin{proposition} \label{grepi}
Let $\K$ be a simplicial $G$-complex. Then there are $\bk G$-algebra isomorphisms
\[
H^*(\zk; \bk) \cong 
\underset{J\in [m]/G} \bigoplus~ \underset{g\in G/{G_J}} \bigoplus g\cdot \widetilde{H}^{*-|J|-1}(K_J; \bk)
\]
where $G_J=\{g\in G\mid g\cdot J=J\}$ is the stabiliser of $J$ and $[m]/G$ is a set of representatives of $G$-orbits of $2^{[m]}$.

The multiplication on $\underset{J\in [m]/G} \bigoplus~ \underset{g\in G/{G_J}} \bigoplus g\cdot \widetilde{H}^*(K_J; \bk)$ is given so that for any $I, J \in [m]/G$ and $g\in G/G_J,~ h\in G/G_I$, there is a map 
\begin{center}
\scalebox{0.9}{$
\mu\colon g\cdot H^{k-|J|-1}(\K_J; \bk) \otimes h\cdot H^{l-|I|-1}(\K_I; \bk)=H^{k-|J|-1}(\K_{g\cdot J}; \bk) \otimes H^{l-|I|-1}(\K_{h\cdot I}; \bk) \rightarrow H^{k+l-|I|-|J|-1}(\K_{g\cdot J\cup h\cdot I}; \bk)
$}
\end{center}
which is induced by the simplicial inclusion $\K_{g\cdot J\cup h\cdot I}\longrightarrow \K_{g\cdot J}*\K_{h\cdot I}$ if $g\cdot J\cap h\cdot I=\emptyset$ and is a zero map otherwise.
\end{proposition}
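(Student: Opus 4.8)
The plan is to deduce this from Corollary~\ref{galgeiso} together with the general $G$-module decomposition~\eqref{Brown}, applied with $I=2^{[m]}$ and the $G$-action $\Phi$ on subsets. Indeed, Corollary~\ref{galgeiso} gives a $\bk G$-algebra isomorphism $H^*(\zk;\bk)\cong\bigoplus_{J\subseteq[m]}\widetilde{H}^*(\K_J;\bk)$, and by Lemma~\ref{mainlemma} the $G$-action on the right-hand side permutes the summands exactly according to $\Phi$: the summand indexed by $J$ is sent isomorphically onto the summand indexed by $g\cdot J$. So the hypotheses of~\eqref{Brown} are met with $N_J=\widetilde{H}^*(\K_J;\bk)$, $G_J$ the stabiliser of $J$, and $E=[m]/G$ a set of orbit representatives. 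This yields a $\bk G$-module isomorphism $H^*(\zk;\bk)\cong\bigoplus_{J\in[m]/G}\mathrm{Ind}^G_{G_J}\widetilde{H}^*(\K_J;\bk)$.

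Next I would unwind the induced module $\mathrm{Ind}^G_{G_J}\widetilde{H}^*(\K_J;\bk)=\bk G\otimes_{\bk G_J}\widetilde{H}^*(\K_J;\bk)$ into the more explicit form stated in the proposition. Choosing coset representatives for $G/G_J$, we have $\mathrm{Ind}^G_{G_J}\widetilde{H}^*(\K_J;\bk)\cong\bigoplus_{g\in G/G_J}g\otimes\widetilde{H}^*(\K_J;\bk)$, and via the isomorphism $g\cdot$ of Lemma~\ref{mainlemma} the summand $g\otimes\widetilde{H}^*(\K_J;\bk)$ is identified with $g\cdot\widetilde{H}^*(\K_J;\bk)=\widetilde{H}^*(\K_{g\cdot J};\bk)$; this identification is manifestly $G$-equivariant because for $h\in G$, $h\cdot(g\otimes x)=(hg)\otimes x$ corresponds to the reindexing $\K_{g\cdot J}\rightsquigarrow\K_{hg\cdot J}$. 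Accounting for the cohomological degree shift coming from~\eqref{fJ} (the summand $\widetilde{H}^{p-1}(\K_J;\bk)$ contributes to $H^{p+|J|}(\zk;\bk)$ — equivalently, $\widetilde{H}^{*-|J|-1}$) gives the displayed direct sum decomposition. I would note that the degree shift is $G$-invariant within an orbit since $|g\cdot J|=|J|$.

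It remains to identify the multiplication. Here I would start from the description of the product on $\bigoplus_J\widetilde{H}^*(\K_J;\bk)$ recalled in the proof of Corollary~\ref{galgeiso}, namely the maps $\widetilde{H}^{i}(\K_I;\bk)\otimes\widetilde{H}^{j}(\K_J;\bk)\to\widetilde{H}^{i+j+1}(\K_{I\cup J};\bk)$ induced by the simplicial inclusions $\K_{I\cup J}\hookrightarrow\K_I*\K_J$ when $I\cap J=\emptyset$ and zero otherwise. Transporting this along the identifications above, the component indexed by $(g,J)$ times the component indexed by $(h,I)$ — which under Lemma~\ref{mainlemma} are $\widetilde{H}^*(\K_{g\cdot J};\bk)$ and $\widetilde{H}^*(\K_{h\cdot I};\bk)$ — lands in the component indexed by the subset $g\cdot J\cup h\cdot I$, through the inclusion $\K_{g\cdot J\cup h\cdot I}\hookrightarrow\K_{g\cdot J}*\K_{h\cdot I}$ if $g\cdot J\cap h\cdot I=\emptyset$, and is zero otherwise, with the degree bookkeeping $(k-|J|-1)+(l-|I|-1)+1=k+l-|I|-|J|-1$ matching the claim. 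Since $|g\cdot J|=|J|$ and $|h\cdot I|=|I|$, this is exactly the stated formula for $\mu$.

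The only genuinely delicate point, and the one I would treat most carefully, is the compatibility of signs and of the degree-shift conventions under the various identifications: the cochain isomorphisms $f_J$ of~\eqref{fJ} carry signs $\epsilon(\sigma,J)$, Lemma~\ref{mainlemma} introduces the orientation sign $\epsilon(g,\sigma)$, and the product uses the join inclusion; one must check these cohere with the commutation relation $f_{g\cdot J}\circ g=g\circ f_J$ already recorded in Corollary~\ref{galgeiso}. Everything else is a formal consequence of~\eqref{Brown} and the functoriality of the Buchstaber–Panov isomorphism, so I expect the write-up to be short once the sign bookkeeping is in place.
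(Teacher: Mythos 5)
Your proposal is correct and takes essentially the same route as the paper: the paper likewise starts from Corollary~\ref{galgeiso} and regroups the Hochster summands by $G$-orbits and coset representatives (i.e.\ the decomposition~\eqref{Brown}), declaring the multiplication on the orbit-indexed side to be the one transported along this $G$-isomorphism. Your unwinding of $\mathrm{Ind}^G_{G_J}$, the degree shift, and the join-inclusion description of $\mu$ simply make explicit what the paper's terse proof leaves implicit, so there is no gap.
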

\begin{proof}
Since by Corollary \ref{galgeiso} $H^*(\zk; \bk) \cong \underset{J\subseteq [m]}\bigoplus\widetilde{H}^*(K_J;\bk)$ as $\bk G$-algebras,
it suffices to show that the $G$-isomorphism
\[
\underset{J\subseteq [m]}\bigoplus\widetilde{H}^*(K_J;\bk ) \cong \underset{J\in [m]/G} \bigoplus~ \underset{g\in G/{G_J}} \bigoplus g\cdot \widetilde{H}^{*}(K_J; \bk)
\]
preserves the multiplications on both sides.
The multiplication on $\underset{J\in [m]/G} \bigoplus~ \underset{g\in G/{G_J}} \bigoplus g\cdot \widetilde{H}^*(K_J; \bk)$ is induced by the multiplication on $\underset{J\subseteq [m]}\bigoplus\widetilde{H}^*(K_J; \bk)$ via the above $G$-isomorphism. Therefore, 
\[
H^*(\zk; \bk) \cong 
\underset{J\in [m]/G} \bigoplus~ \underset{g\in G/{G_J}} \bigoplus g\cdot \widetilde{H}^{*-|J|-1}(K_J;\bk)
\]
as $\bk G$-algebras.
\end{proof}

We illustrate Proposition~\ref{grepi} on several examples.
\begin{example}
Let $\K$ be the boundary of a square, 
\begin{tikzpicture} 
\tikzstyle{point}=[circle,thick,draw=black,fill=black,inner sep=0pt,minimum width=2pt,minimum height=0pt] 
\node (1)[point, label=left:\small{${1}$}] at (0,-0.5) {}; 
\node (2)[point, label=right:\small{$2$}] at (0.5,-0.5) {}; 
\node (3)[point, label=right:\small{$3$}] at (0.5,0) {};
\node (4)[point, label=left:\small{$4$}] at
(0,0) {};
\draw (1) -- (2) -- (3) -- (4) -- (1); 
\end{tikzpicture}. 
It is a simplicial $C_4$-complex, where $C_4$ is the cyclic group of order $4$. Write $C_4=\{(1), (1234), (13)(24), (1432)\}$ as a subgroup of the permutation group $\Sigma_4$. A set of representatives of $2^{[4]}$ under $C_4$ is given by
\[
E=\{\emptyset, \{1\}, \{1,2\}, \{1,3\}, \{1,2,3\}, \{1,2,3,4\}\}.
\]

Taking $J$ to be an element in $E$, observe that
\[\widetilde{H}^{p}(\K_J; \bk)=
\begin{cases}\begin{array}{ll}
\bk & \text{when} ~J=\emptyset ~\text{and}~p=-1 \\
\bk &\text{when} ~J=\{1, 3\} ~\text{and}~p=0\\
\bk & \text{when} ~J=\{1, 2, 3, 4\} ~\text{and}~p=1\\
0& \text{otherwise}.
\end{array}
\end{cases}
\]
The stabilisers $G_J$ corresponding to $J=\emptyset$, $J=\{1, 3\}$ and $J=\{1, 2, 3, 4\}$ are $G_{\emptyset}=C_4$, $G_{13}=\{(1), (13)(24)\}$ and $G_{1234}=C_4$, respectively.
Therefore, the cohomology groups of $\zk$ are given by
\[H^i(\zk; \bk)=
\begin{cases}
\begin{array}{ll}
\bk \oplus \bk &\text{for}~ i=3\\
\bk &\text{for}~ i=0, 6.
\end{array}
\end{cases}
\]
\end{example}



\begin{example}\label{skeleton}
Let $\K=\Delta_{m}^k$ be the full $k$-skeleton of $\Delta^{m-1}$ which consists all subsets of $[m]$ with cardinality at most $k+1$.  The permutation group $\Sigma_m$ acts on $\K$ simplicially. A set of representatives of $2^{[m]}$ under the action of $\Sigma_m$ can be also chosen as 
\[
E=\{\emptyset, \{1\}, \{1, 2\}, \ldots, \{1, \ldots, m\}\}.
\]
For any $J=\{1, 2, \ldots, |J|\} \in E$, the stabiliser of $J$ is the Young subgroup $\Sigma_{|J|} \times \Sigma_{m-|J|}$. If $J \in E$ with $|J|\leq k+1$, then $\K_J =\Delta^{|J|-1}$. Thus $\widetilde{H}^*(\K_J; \bk)=0$.

If $J \in E$ with $k+2 \leq |J|\leq m$, then $\K_J$ is the full $k$-skeleton of $\Delta^{|J|-1}$. Recall that $\widetilde{H}^*(\K_J; \bk)=\underset{c}{\oplus}\bk $, where $c=\binom{|J|-1}{k+1}$ if $*=k$; otherwise $\widetilde{H}^*(\K_J; \bk)=0$. Therefore,

\[H^i(\zk; \bk)=
\begin{cases}\begin{array}{ll}
\bk &\text{where} ~i=0 \\
\underset{c}{\oplus} \bk &\text{where}~ c=\binom{m}{|J|} \binom{|J|-1}{k+1} ~\text{and}~ i=|J|+k+1\\
0&\text{otherwise}.
\end{array}
\end{cases}
\]
Let us remark that for $k=0$, the simplicial complex $K$ consists of $m$ disjoint points and denote by $\mathcal{Z}_m$ the moment-angle complex corresponding to it. 
By Proposition~\ref{grepi}, 
$H^3(\mathcal{Z}_m; \bk)$ has a basis $\{a_{ij}\mid 1\leq i<j\leq m\}$ and identifying $a_{ji}=-a_{ij}$, the symmetric group $\Sigma_m$ acts on $H^3(\mathcal{Z}_m; \bk)$ by a permutation of the indices.

For $K_m=\Delta_{m}^k$ with  $k$ fixed and  $m$ increasing, we get a sequence of moment-angle complexes $\{\mathcal{Z}_{K_{m}}\}$. There exist retractions $p_m\colon \mathcal{Z}_{K_{m+1}}\longrightarrow \mathcal{Z}_{K_{m}}$ obtained by restricting the projection map \scalebox{0.9}{$(D^2)^{m+1} \longrightarrow (D^2)^{m}$} to $\zk$. We shall consider the  representation stability of the sequence \scalebox{0.9}{$\{H^i(\zk; \bk), p_m^i\}$}  in Section~\ref{rspp}.
\end{example}

\section{Polyhedral products associated with simplicial $G$-complexes}
Moment-angle complexes are specific examples of polyhedral products $(X,A)^K$ which are constructed from combinatorial information of a simplicial complex $K$ and a topological pair $(X,A)$. Our next aim is to study symmetries of polyhedral products induced by the symmetries of $K$.
The geometric and homological properties of polyhedral products arising from simplicial $\Aut(K)$-complexes have been studied by Ali Al-Raisi in his PhD thesis~\cite{Ali}.
Al-Raisi proved that the map $(X, A)^K \longrightarrow \Omega\Sigma(\underset{I\subseteq [m]}{\bigvee} (X, A)^{\wedge \K_I})$ is homotopy $\Aut(K)$-equivariant.

In this section, we will give a different method for studying homotopy $G$-decompositions of polyhedral product $(X, A)^K$ associated with a simplicial $G$-complex $K$ by studying  the adjoint of the Al-Raisi map, known as the Bahri-Bendersky-Cohen-Gitler (BBCG) map, after several suspensions. We start with the BBCG homotopy decomposition for polyhedral products $(X, A)^{\K}$ (see~\cite{BBCG}). 
For any subset $I=\{i_1, \ldots, i_l\} \subseteq [m]$, and a pair of connected based CW-complexes $(X, A)$, recall the following notation
\[
\begin{split}
(X, A)^{I}&=\{(x_1, \ldots, x_m)\in \overset{m}{\underset{j=1}{\prod}}X\mid x_j\in A ~\mathrm{for}~j \notin I\},\\
(X, A)^{\wedge I}&=\{x_1\wedge\ldots \wedge x_m\in X \wedge \ldots \wedge X \mid x_j \in A ~\mathrm{for}~ j\notin I\},\\
X^{\wedge I}&=X_{i_1} \wedge \ldots \wedge X_{i_l}, ~\text{where~ each}~ X_{i_j}=X.
\end{split}
\]
For  a simplicial complex $K$ on $m$ vertices, the {\it polyhedral product} with respect to $(X, A)$ is defined as 
\[
(X, A)^{\K}=\underset{I\in \K}\bigcup(X, A)^I.
\]

Analogously, the {\it polyhedral smash product} of a topological pair $(X,A)$ and a simplicial complex $K$  is defined as 
\[(X, A)^{\wedge \K}=\underset{I \in \K}{\bigcup}(X, A)^{\wedge I}.\]

In~\cite{BBCG}, it was shown that the classical homotopy equivalence $\Sigma(X_1\times \ldots \times X_m)\to \Sigma\underset{I\subseteq [m]}\bigvee X^{\wedge I}$ induces the following homotopy decomposition.
\begin{equation}
\label{decmpoly}
\Sigma (X,A)^{\K} \simeq \Sigma \underset{I\subseteq [m]}{\bigvee} (X, A)^{\wedge \K_I}
\end{equation}
when $(X, A)$ is a topological pair of connected and based CW-complexes.


If $\K$ is a simplicial $G$-complex, then the $G$-action on $\K$ induces a cellular $G$-action on the corresponding polyhedral product $(X, A)^{\K}$ with respect to a pair of CW-complexes $(X, A)$, $A\subseteq X$. Explicitly, for $\underline{x}=(x_1, \ldots, x_m)\in (X, A)^{\K}$, $g\cdot (x_1, \ldots, x_m)=(x_{g\cdot 1}, \ldots, x_{g\cdot m})$. Thus $(X, A)^{\K}$ is a $G$-complex.

If $Y$ is a $G$-CW-complex, then each $i$-th homology group $H_i(Y; R)$ is an $RG$-module. Consider a natural $G$-action on $\Sigma Y$ by $g\cdot (\langle y, t\rangle)=\langle g\cdot y, t\rangle$ for $g\in G$. The naturality of long exact sequence for the topological pair $(CY, Y)$ implies that the isomorphism $H_{i+1}(\Sigma Y; R)\cong \widetilde{H}_i(Y; R)$ is an $RG$-isomorphism.

Consider  $X^{m}$ as a $\Sigma_m$-space given by
 $g\cdot  \underline{x}=g\cdot (x_1, \ldots, x_m)=(x_{g\cdot 1}, \ldots, x_{g\cdot m})$ for $g\in \Sigma_m$ and $x_i\in X$.
There exists a $\Sigma_m$-action on the based spaces $\Sigma X^{m}$ and $\Sigma (\underset{I\subseteq [m]}{\bigvee}X^{\wedge I})$, where $I$ runs over the non-empty subset of $[m]$. Explicitly, for every $g\in \Sigma_m$ and $\langle \underline{x}, t\rangle \in \Sigma X^{m}$, 
$g \cdot \langle \underline{x}, t\rangle =\langle g\cdot \underline{x}, t\rangle$.
 For any non-empty subset $I=\{i_1, \ldots, i_l\} \subseteq [m]$, each map $g\colon \Sigma X^{\wedge I} \to \Sigma X^{\wedge g\cdot I}$ sending $\langle x_{i_1}\wedge \ldots \wedge x_{i_l}, t\rangle$ to $\langle x_{g\cdot i_1}\wedge \ldots \wedge x_{g \cdot i_l}, t\rangle$ induces a $\Sigma_m$-action on $\Sigma \underset{I\subseteq [m]}{\bigvee}X^{\wedge I}$.

\begin{lemma}\label{symmetrichmpy}
There exists a homotopy equivalence
\[
\Sigma\theta_m\colon \Sigma^2 X^{m} \longrightarrow \Sigma^2 \underset{I\subseteq [m]}{\bigvee}X^{\wedge I}
\] 
that is $\Sigma_m$-equivariant.
\end{lemma}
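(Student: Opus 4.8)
The plan is to upgrade the classical (non-equivariant) James-type splitting $\Sigma X^m \simeq \Sigma\bigvee_{I\subseteq[m]}X^{\wedge I}$ to an equivariant statement by a careful bookkeeping of the maps involved. Recall that the classical decomposition is built from the collapse maps: for each non-empty $I=\{i_1,\dots,i_l\}\subseteq[m]$ there is a projection $\pi_I\colon X^m\to X^{\wedge I}$ onto the smash of the coordinates indexed by $I$ (collapsing a coordinate to the basepoint whenever its index lies outside $I$), and after one suspension the wedge of these maps admits a homotopy section, yielding $\Sigma\theta_m$. The key observation is that $\Sigma_m$ permutes these projections exactly according to how it permutes the index sets: $g\circ\pi_I = \pi_{g\cdot I}\circ g$, which is precisely the compatibility built into the $\Sigma_m$-action on $\Sigma\bigvee_I X^{\wedge I}$ described just before the lemma. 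So the map $\bigvee_I\Sigma\pi_I$ is already strictly $\Sigma_m$-equivariant; the only issue is that its homotopy inverse need not be.

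First I would recall the explicit construction of the homotopy inverse in~\cite{BBCG} (equivalently the classical argument): after suspending once, $\Sigma X^m$ splits because the inclusion $\Sigma X^{\wedge I}\hookrightarrow \Sigma X^m$ (via the various ``fat wedge''-type subspaces) together with the projections exhibit a filtration of $X^m$ by fat wedges whose successive quotients are the $\bigvee_{|I|=j}X^{\wedge I}$, and each stage of the filtration splits off after suspension. The maps in this filtration — inclusions of fat wedges $T_j(X^m)=\{\underline{x}: \text{at least }m-j\text{ coordinates are basepoint}\}$ — are manifestly $\Sigma_m$-equivariant, since $\Sigma_m$ permutes coordinates and hence preserves each $T_j$. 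Thus the cofiber sequences $T_{j-1}(X^m)\hookrightarrow T_j(X^m)\to \bigvee_{|I|=j}X^{\wedge I}$ are sequences of $\Sigma_m$-spaces and $\Sigma_m$-maps. The point of the extra suspension in the statement ($\Sigma^2$ rather than $\Sigma$) is to make the splitting of each such cofiber sequence equivariant: a cofiber sequence of pointed $\Sigma_m$-CW-complexes $A\to B\to C$ with a (non-equivariant) section of $\Sigma B\to \Sigma C$ need not have an equivariant section, but one can average, or more robustly use that $\Sigma^2 C\to \Sigma^2 B$ splits equivariantly whenever $\Sigma C\to \Sigma B$ splits, because the relevant obstruction/the relevant retraction can be constructed cell-by-cell over the orbit cells of $C$ once one is in the stable range afforded by the double suspension. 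Concretely, I would assemble the equivalence as $\Sigma^2\theta_m = \bigvee_I \Sigma^2\pi_I$ and construct its inverse inductively up the fat-wedge filtration, at each stage extending an equivariant section over the new orbit cells using that $[\,\Sigma^2(G/H_+\wedge S^n),\, \Sigma^2 T_j\,]\to[\,\Sigma^2(G/H_+\wedge S^n),\,\Sigma^2 B\,]$ hits the class we need (this is where connectivity of $(X,A)$, hence of $X$, and the double suspension are used).

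Alternatively — and this is probably the cleaner route to write down — I would decompose the $\Sigma_m$-set of non-empty subsets of $[m]$ into orbits and invoke~\eqref{Brown}: on homology, $\widetilde H_*(\bigvee_I X^{\wedge I})\cong\bigoplus_{I\in E}\mathrm{Ind}^{\Sigma_m}_{(\Sigma_m)_I}\widetilde H_*(X^{\wedge I})$, and one shows the classical equivalence is equivariant by checking it orbit-by-orbit. For a single orbit through $I$ with stabiliser $W=(\Sigma_m)_I$ (a Young subgroup), the wedge $\bigvee_{g\in\Sigma_m/W}X^{\wedge g\cdot I}$ is the $\Sigma_m$-space induced up from the $W$-space $X^{\wedge I}$, and the corresponding summand of $X^m$ — the ``layer'' contributed by this orbit inside the fat-wedge filtration quotient — is likewise an induced $W$-space up to the relevant suspension; naturality of the James splitting under the $W$-action (which is again just permutation of a subset of coordinates, and so covered by the same collapse-map naturality $g\circ\pi_I=\pi_{g\cdot I}\circ g$) then gives the splitting equivariantly over that orbit, and summing over orbits in $E$ finishes it.

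The main obstacle is the second step: producing the homotopy inverse equivariantly rather than merely producing the equivariant map $\bigvee_I\Sigma^2\pi_I$. Getting a non-equivariant section is classical, but promoting it to a $\Sigma_m$-section requires either an averaging/transfer argument (which needs care since we are working with spaces, not chain complexes, so one must average at the level of stable maps or use that the relevant mapping spaces are, after double suspension, infinite loop spaces where one can average) or the orbit-by-orbit induction above. I expect the induction-over-the-fat-wedge-filtration-using-induced-spaces approach to be the one that actually works without hidden gaps, with the double suspension $\Sigma^2$ being exactly what guarantees each extension problem over a new orbit cell $G/H_+\wedge S^n$ is solvable. I would therefore spend most of the write-up making precise that each filtration quotient $T_j/T_{j-1}\simeq\bigvee_{|I|=j}X^{\wedge I}$ is, as a $\Sigma_m$-space, a wedge of induced spaces $\mathrm{Ind}^{\Sigma_m}_{W_I}X^{\wedge I}$ over orbit representatives, and that the classical stable splitting respects this, so that Lemma~\ref{symmetrichmpy} reduces to the naturality of the James splitting under Young subgroups acting by coordinate permutation.
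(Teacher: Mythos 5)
Your proposal correctly isolates the compatibility $g\circ\pi_I=\pi_{g\cdot I}\circ g$ of the collapse maps, but it misses the actual point of the extra suspension and replaces it with steps that do not work. First, the assembled splitting map cannot literally be written as $\bigvee_I\Sigma^2\pi_I$, since the source $\Sigma^2 X^m$ is not a wedge: to combine the projections into one map you must choose a comultiplication $\delta_m\colon \Sigma X^m\to\bigvee_{j}\Sigma X^m$, i.e.\ an ordering of the $2^m-1$ wedge summands, and this is exactly where strict equivariance breaks. The paper's proof is a direct computation showing that $\theta_m\circ g$ and $g\circ\theta_m$ differ precisely by the permutation $T$ of those intermediate wedge summands induced by $g$, and the r\^ole of the second suspension is that the suspension of a comultiplication is homotopy cocommutative, so the permutation $T$ can be absorbed up to homotopy, giving $\Sigma(g\circ\theta_m)\simeq\Sigma(\theta_m\circ g)$. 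Note also that this is all the lemma claims and all that Theorem~\ref{gppdec} uses: homotopy equivariance of the specific classical equivalence $\Sigma\theta_m$, not a strictly equivariant map together with an equivariant homotopy inverse, which is the stronger statement you set out to build.

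Second, the mechanisms you propose for producing an equivariant inverse have genuine gaps. ``Averaging'' a section requires dividing by $|\Sigma_m|$, which is not available at the space level after only one or two suspensions (you are not in a stable or rational setting where a transfer argument applies). The claim that $\Sigma^2 C\to\Sigma^2 B$ splits equivariantly whenever $\Sigma C\to\Sigma B$ splits non-equivariantly is not a standard fact and is unjustified: double suspension raises the connectivity of the target and the dimension of the orbit cells of the source by the same amount, so the equivariant obstruction-theoretic range over cells $G/H_+\wedge S^n$ does not improve, and the smash powers $X^{\wedge I}$ have no dimension bound relative to their connectivity for a general finite complex $X$. Finally, the orbit-by-orbit route via~\eqref{Brown} is a statement about homology as a $\bk\Sigma_m$-module; it cannot produce the space-level homotopy equivariance, and ``naturality of the James splitting under Young subgroups'' is precisely the assertion to be proved, not an input. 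To repair the argument you would essentially have to reproduce the paper's computation: exhibit the failure of equivariance of $\theta_m$ as a permutation of wedge summands of the target of $\delta_m$ and then invoke homotopy cocommutativity of $\Sigma\delta_m$.
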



\begin{proof}
For a non-empty set $I=\{i_1, \ldots, i_l\}\subseteq [m]$, define maps $\Sigma p^{\wedge I}$  by
\[
\begin{split}
\Sigma p^{\wedge I}\colon \Sigma X^{ m} &\longrightarrow \Sigma X^{\wedge I} \\
\langle x_1, \ldots, x_m, t\rangle &\longmapsto \langle x_{i_1}\wedge \ldots \wedge x_{i_l}, t\rangle
\end{split}
\]
Let $L=2^m-1$. Define a  comultiplication map $\delta_m \colon \Sigma X^{ m} \longrightarrow \overset{L}{\underset{j=1}\bigvee} \Sigma X^{m}$ on $\Sigma X^{m}$ such that \\
if $t\in [\frac{i}{L}, \frac{i+1}{L}]$ ($0\leq i\leq L-1$), 
\[
\delta_m(\langle x_1, \ldots, x_m, t\rangle)=(*, \ldots, *, \langle x_1, \ldots, x_m, Lt-i\rangle, *, \ldots, * )
\] 
where $\langle x_1, \ldots, x_m, Lt-i\rangle$ is in the $(i+1)$-st wedge summand of $\overset{L}{\underset{j=1}\bigvee} \Sigma X^{ m}$.

Fix an order $I_1 > I_2 > \cdots > I_{L}$ on the finite set $\{I\subseteq [m]\mid I\neq \emptyset\}$. Let each $I_j$ contain elements written in an increasing order.
Rewrite $ \Sigma (\underset{I\subseteq [m]}{\bigvee}X^{\wedge I})$ as $\Sigma X^{\wedge I_1}\vee \ldots \vee \Sigma X^{\wedge I_L}$.

Consider a map $\underset{I\in 2^{[m]}\setminus \emptyset}\bigvee \Sigma p^{\wedge I}\colon  \overset{L}{\underset{j=1}\bigvee} \Sigma X^{m} \longrightarrow   \Sigma (\underset{I\subseteq [m]}{\bigvee}X^{\wedge I})$ given by
\[
\underset{I\in 2^{[m]}\setminus \emptyset}\bigvee \Sigma p^{\wedge I}= \Sigma p^{\wedge I_1} \vee \ldots \vee \Sigma p^{\wedge I_{L}} \colon \overset{L}{\underset{j=1}\bigvee} \Sigma X^{m} \longrightarrow \Sigma X^{\wedge I_1}\vee \ldots \vee \Sigma X^{\wedge I_L}.
\]
Thus the map
\[
\theta_m=\underset{I\in 2^{[m]}\setminus \emptyset}\bigvee \Sigma p^{\wedge I} \circ \delta_m.
\]

Let $g\in \Sigma_m$ and $\langle x_1, \ldots, x_m, t\rangle \in \Sigma X^{m}$. For $t\in [\frac{i}{L}, \frac{i+1}{L}]$ ($0\leq i\leq L-1$), there is
\[
\begin{split}
\theta_m \circ g(\langle x_1, \ldots, x_m, t\rangle)&=\underset{I\in 2^{[m]}\setminus \emptyset}\bigvee \Sigma p^{\wedge I} \circ \delta_m (\langle x_{g\cdot 1}, \ldots, x_{g\cdot m}, t\rangle )\\
&=\underset{I\in 2^{[m]}\setminus \emptyset}\bigvee \Sigma p^{\wedge I}(*, \ldots, *,\underbrace{\langle x_{g\cdot 1}, \ldots, x_{g\cdot m}, Lt-i\rangle}_{i+1}, *,\ldots, *)\\
&=(*, \ldots, *,\underbrace{\langle x_{g\cdot m_1^{(i+1)}}\wedge \ldots\wedge x_{g\cdot m_s^{(i+1)}}, Lt-i\rangle}_{i+1}, *,\ldots, *)
\end{split}
\]
where $I_{i+1}=\{m_1^{(i+1)}, \ldots, m_s^{(i+1)}\}$ with $m_1^{(i+1)}< \cdots< m_s^{(i+1)}$.

Recall that $\underset{I\in 2^{[m]}\setminus \emptyset}\bigvee \Sigma p^{\wedge I}= \Sigma p^{\wedge I_1} \vee \ldots \vee \Sigma p^{\wedge I_{L}}$ and define by $\underset{I\in 2^{[m]}\setminus \emptyset}\bigvee \Sigma p^{\wedge (g\cdot I)}= \Sigma p^{\wedge (g\cdot I_1)} \vee \ldots \vee \Sigma p^{\wedge (g\cdot I_{L})}$.
Hence, $\theta_m\circ g=\underset{I\in 2^{[m]}\setminus \emptyset}\bigvee \Sigma p^{\wedge (g\cdot I)}\circ \delta_m$.

On the other hand, there exits a permutation $T$ of summand
$\overset{L}{\underset{j=1}\bigvee} \Sigma X^{m}$ induced by $g$ such that $g\circ \theta_m=\underset{I\in 2^{[m]}\setminus \emptyset}\bigvee \Sigma p^{\wedge (g\cdot I)}\circ T\circ \delta_m$.
Since $g$ acts on a set $\{1, \ldots, L\}$ by  $g\cdot i$ being the unique number satisfying $I_{g\cdot i}=g\cdot I_i$ as sets, this action on $\{1, \ldots, L\}$  induces a permutation $T$ of $\overset{L}{\underset{j=1}\bigvee} \Sigma X^{m}$.
Note that for $t\in [\frac{i}{L}, \frac{i+1}{L}]$ ($0\leq i\leq L-1$),
\[\begin{split}
&\underset{I\in 2^{[m]}\setminus \emptyset}\bigvee \Sigma p^{\wedge (g\cdot I)}\circ T\circ \delta_m (\langle x_{ 1}, \ldots, x_{ m}, t\rangle)\\
=&\underset{I\in 2^{[m]}\setminus \emptyset}\bigvee \Sigma p^{\wedge (g\cdot I)}\circ T(*, \ldots, *, \underbrace{\langle x_1, \ldots, x_m, Lt-i\rangle}_{i+1}, *, \ldots, * )\\
=&\underset{I\in 2^{[m]}\setminus \emptyset}\bigvee \Sigma p^{\wedge (g\cdot I)} (*, \ldots, *,\underbrace{\langle x_1, \ldots, x_m, Lt-i\rangle}_{g\cdot (i+1)}, *, \ldots, * )\\
= &(*, \ldots, *,\underbrace{\langle x_{g\cdot m_1^{(i+1)}}\wedge \ldots\wedge x_{g\cdot m_s^{(i+1)}}, Lt-i\rangle}_{g\cdot (i+1)}, *,\ldots, *)
\end{split}
\]
where $I_{i+1}=\{m_1^{(i+1)}, \ldots, m_s^{(i+1)}\}$ with $m_1^{(i+1)}< \cdots< m_s^{(i+1)}$.

Also, for $t\in [\frac{i}{L}, \frac{i+1}{L}]$ ($0\leq i\leq L-1$),
\[
\begin{split}
g\circ \theta_m (\langle x_{ 1}, \ldots, x_{ m}, t\rangle)&=g(*, \ldots, *, \underbrace{\langle x_{m_1^{(i+1)}}\wedge \ldots\wedge x_{m_s^{(i+1)}}, Lt-i\rangle}_{i+1}, *, \ldots, * )\\
&=(*, \ldots, *, \underbrace{\langle x_{g\cdot m_1^{(i+1)}}\wedge \ldots\wedge x_{g\cdot m_s^{(i+1)}}, Lt-i\rangle}_{g\cdot (i+1)}, *, \ldots, * ).
\end{split}
\]
Thus we have $g\circ \theta_m=\underset{I\in 2^{[m]}\setminus \emptyset}\bigvee \Sigma p^{\wedge (g\cdot I)}\circ T\circ \delta_m$.

Since $\Sigma \delta_m$ is cocommutative, $\Sigma (g\circ \theta_m) \simeq \Sigma (\theta_m\circ g)$.
\end{proof}

The following statement is a consequence of Lemma \ref{symmetrichmpy}.
\begin{lemma}\label{symmetrichmpy2}
a) For $g\in \Sigma_m$ and $I \subseteq [m]$, there is the homotopy commutative diagram
\begin{equation} \label{symmetricaction1}
\xymatrix{
\Sigma^2 (X, A)^I \ar[rr]^-{\simeq} \ar[d]^{g}	&& \underset{J\subseteq [m]}{\bigvee}\Sigma^2 (X, A)^{\wedge (I\cap J)} \ar[d]^{g}\\
\Sigma^2(X, A)^{g\cdot I}\ar[rr]^-{\simeq}	&& \underset{g\cdot J\subseteq [m]}{\bigvee} \Sigma^2 (X, A)^{\wedge g\cdot (I\cap J)}
}
\end{equation}
where the vertical map $g$ on the left is given by 
\[
g\cdot \langle x_1, \ldots, x_m, t, s\rangle=\langle x_{g\cdot1}, \ldots, x_{g\cdot m}, t, s\rangle 
\]
and the vertical map $g$ on the right maps each element in $\Sigma^2(X, A)^{\wedge (I\cap J)}$ into the corresponding one in $\Sigma^2(X, A)^{\wedge g\cdot (I\cap J)}$ via a coordinate permutation by $g$.

b) For an inclusion $I_1\subseteq I_2 \subseteq [m]$, there is the  diagram
\begin{equation}\label{symmetricaction2}
\scalemath{0.85}{\xymatrix{
& \Sigma^2(X, A)^{I_1} \ar[ld] \ar[rr]^-{g} \ar'[d][dd]^-{\simeq}  & & \Sigma^2(X, A)^{g\cdot I_1}  \ar[dd]^{\simeq}\ar[ld]
\\
\Sigma^2(X, A)^{I_2} \ar[rr]^{\qquad g}\ar[dd]^{\simeq}
& & \Sigma^2(X, A)^{g\cdot I_2} \ar[dd]^(0.6){\simeq}
\\
&  \underset{g\cdot J\subseteq [m]}{\bigvee}\Sigma^2 (X, A)^{\wedge  ({I_1}\cap J)} \ar[ld] \ar'[r][rr]^-g
& & \underset{g\cdot J\subseteq [m]}{\bigvee}\Sigma^2 (X, A)^{\wedge g\cdot ({I_1}\cap J)} \ar[ld]
\\
\underset{J\subseteq [m]}{\bigvee} \Sigma^2 (X, A)^{\wedge ({I_2}\cap J)} \ar[rr]^-g
& & \underset{J\subseteq [m]}{\bigvee} \Sigma^2 (X, A)^{\wedge g\cdot ({I_2}\cap J)}
}}
\end{equation}
where the four side diagrams are homotopy commutative and the top and bottom diagrams are commutative.\qed 
\end{lemma}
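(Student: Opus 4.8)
The plan is to obtain both parts of the lemma by restricting the $\Sigma_m$-equivariant homotopy equivalence $\Sigma\theta_m$ of Lemma~\ref{symmetrichmpy} to the subspaces $\Sigma^2(X,A)^I\subseteq\Sigma^2 X^{m}$, together with the naturality of the decomposition~\eqref{decmpoly} under the inclusions $(X,A)^{I_1}\hookrightarrow(X,A)^{I_2}$ for $I_1\subseteq I_2$. The underlying observation is that $(X,A)^I$ is the polyhedral product of the full simplex on the vertex set $I$ (with the vertices of $[m]\setminus I$ present as ghosts), that $X^{m}$ is the polyhedral product of the full simplex on $[m]$, and that the decomposition~\eqref{decmpoly} of $(X,A)^I$ is literally the restriction of $\Sigma\theta_m$ to $\Sigma^2(X,A)^I$: the comultiplication $\delta_m$ only reparametrises the suspension coordinate and hence restricts, while for non-empty $I'\subseteq[m]$ the map $\Sigma p^{\wedge I'}$ carries $\Sigma(X,A)^I$ into $\Sigma(X,A)^{\wedge(I\cap I')}$, since a coordinate $x_j$ with $j\notin I$ already lies in $A$.

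For part a), once this identification is in place, the square~\eqref{symmetricaction1} is exactly the restriction of the homotopy $\Sigma\theta_m\circ g\simeq g\circ\Sigma\theta_m$ of Lemma~\ref{symmetrichmpy} to $\Sigma^2(X,A)^I$ and $\Sigma^2(X,A)^{g\cdot I}$. Indeed, the left vertical $g$ restricts to the coordinate permutation on $\Sigma^2(X,A)^I$, and the right vertical $g$ restricts, wedge summand by wedge summand, to the coordinate-permutation homeomorphism $\Sigma^2(X,A)^{\wedge(I\cap J)}\to\Sigma^2(X,A)^{\wedge g\cdot(I\cap J)}$ carried by the summand reindexing $J\mapsto g\cdot J$. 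Moreover the homotopy realising the commutativity in Lemma~\ref{symmetrichmpy} arises from cocommutativity of $\delta_m$ and thus only moves the suspension coordinates; it therefore preserves $\Sigma^2(X,A)^I$ and restricts to the homotopy making~\eqref{symmetricaction1} commute.

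For part b), I would verify the six faces of the cube~\eqref{symmetricaction2} separately. The back and front faces are the squares~\eqref{symmetricaction1} attached to $I_1$ and $I_2$, hence homotopy commutative by part a). The remaining four faces are in fact strictly commutative: on the top and bottom faces the inclusions $(X,A)^{I_1}\hookrightarrow(X,A)^{I_2}$ and the induced summand inclusions $(X,A)^{\wedge(I_1\cap J)}\hookrightarrow(X,A)^{\wedge(I_2\cap J)}$ manifestly commute with the coordinate-permutation maps $g$; on the left and right faces both composites equal $\Sigma\theta_m$ evaluated at the given point and then included into the larger wedge, because the decomposition equivalences for $(X,A)^{I_1}$ and $(X,A)^{I_2}$ are restrictions of one and the same map $\Sigma\theta_m$ (and likewise for $g\cdot I_1\subseteq g\cdot I_2$). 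Pasting these faces yields the diagram of part b).

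The step I expect to demand the most care is pinning down that the restriction of $\Sigma\theta_m$ to $\Sigma^2(X,A)^I$ really is the equivalence~\eqref{decmpoly} for $(X,A)^I$, along with the bookkeeping of signs and orderings when $g$ permutes wedge summands: in the construction of $\theta_m$ the subsets $I'\subseteq[m]$ are linearly ordered and their elements listed in increasing order, so one must track the summand permutation $T$ from the proof of Lemma~\ref{symmetrichmpy} and its interaction with the coordinate permutations. Once this is settled, the rest is a matter of restricting maps and homotopies that are already available.
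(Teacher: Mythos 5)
Your proposal is correct and follows exactly the route the paper intends: the paper states Lemma~\ref{symmetrichmpy2} as an immediate consequence of Lemma~\ref{symmetrichmpy}, and your argument simply makes this explicit by observing that the BBCG equivalence for $(X,A)^I$ is the restriction of $\Sigma\theta_m$, that the cocommutativity homotopy moves only suspension coordinates and hence restricts to $\Sigma^2(X,A)^I$, and that the remaining faces of the cube commute by naturality of the inclusions. No gaps; the care you flag about summand orderings and the permutation $T$ is exactly the bookkeeping already carried out in the proof of Lemma~\ref{symmetrichmpy}.
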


Since the homotopy decomposition $\Sigma^2 (X, A)^{\K}\simeq \Sigma^2 \underset{J\subseteq [m]}{\bigvee}(X, A)^{\wedge \K_J}$ is natural with respect to inclusions in $\K$~(\cite[Theorem 2.10]{BBCG}), the next result follows immediately from the lemma above. 

\begin{theorem} \label{gppdec}
Let $\K$ be a simplicial $G$-complex with $m$ vertices. Then there is a homotopy $G$-decomposition
\begin{equation}\label{Geq}
\theta \colon \Sigma^2(X, A)^{\K} \simeq \Sigma^2\underset{J\subseteq [m]}{\bigvee}(X, A)^{\wedge\K_J}
\end{equation}
where the $G$-action on $\Sigma^2(X, A)^{\K}$ is induced by the $G$-action on $X^m$, and the $G$-action on the right hand side is induced by (\ref{symmetricaction1}).
\end{theorem}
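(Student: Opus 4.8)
The plan is to assemble Theorem~\ref{gppdec} from the pieces already in place, the only new ingredient being a ``gluing'' argument that upgrades the pointwise/piecewise equivariance data from Lemma~\ref{symmetrichmpy2} into a single homotopy $G$-equivalence on the colimits. First I would recall that, by~\cite[Theorem 2.10]{BBCG}, the decomposition $\Sigma^2(X,A)^{\K}\simeq\Sigma^2\bigvee_{J\subseteq[m]}(X,A)^{\wedge\K_J}$ is obtained as the colimit over the face poset of $\K$ of the maps $\Sigma^2(X,A)^{I}\to\bigvee_{J}\Sigma^2(X,A)^{\wedge(I\cap J)}$, $I\in\K$, which are compatible with the inclusions $I_1\subseteq I_2$. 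Since $\K$ is a simplicial $G$-complex, the group $G$ acts on this face poset, and Lemma~\ref{symmetrichmpy2}(a) says that each of these structure maps intertwines the coordinate-permutation action of $g$ on the source with the corresponding action on the target, up to homotopy; Lemma~\ref{symmetrichmpy2}(b) records that these homotopies are compatible with the poset structure. Taking the colimit, $g$ therefore induces a self-map of $\Sigma^2(X,A)^{\K}$ (namely the honest homeomorphism $\underline{x}\mapsto(x_{g\cdot1},\dots,x_{g\cdot m})$) and a self-map of $\Sigma^2\bigvee_{J}(X,A)^{\wedge\K_J}$ permuting wedge summands according to $J\mapsto g\cdot J$ and permuting smash coordinates, and the equivalence $\theta$ commutes with these up to homotopy.

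The key steps, in order, are: (1) identify the $G$-action on the left side of~\eqref{Geq} as the restriction to $(X,A)^{\K}$ of the coordinate permutation on $X^m$, which is well defined because a simplicial $G$-action sends $(X,A)^I$ to $(X,A)^{g\cdot I}$ for $I\in\K$ — this is exactly the observation made just before the statement that $(X,A)^{\K}$ is a $G$-complex; (2) identify the $G$-action on the right side as the one specified by diagram~\eqref{symmetricaction1}, i.e. $g$ carries the wedge summand indexed by $J$ to the one indexed by $g\cdot J$ by a smash-coordinate permutation, and note that $\K_{g\cdot J}=g\cdot\K_J$ by Lemma~\ref{subcomplex} so this is an action on the correct space; (3) invoke Lemma~\ref{symmetrichmpy} / Lemma~\ref{symmetrichmpy2}(a) to get, for each $I\in\K$, a homotopy-commutative square relating the local equivalence to $g$; (4) invoke~\cite[Theorem 2.10]{BBCG} together with Lemma~\ref{symmetrichmpy2}(b) to see these squares are coherent over the face poset of $\K$; (5) pass to the colimit/union over $I\in\K$ to conclude that $\theta$ is $G$-equivariant up to homotopy, i.e. $g\circ\theta\simeq\theta\circ g$ for all $g\in G$, which is precisely the assertion of a homotopy $G$-decomposition.

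The step I expect to be the main obstacle is (4)–(5): making the passage from ``each square homotopy-commutes'' to ``the colimit map is equivariant up to homotopy'' genuinely rigorous. The honest way is to note that, as an unstably-defined map, the square~\eqref{symmetricaction1} need not commute on the nose (the comultiplication $\delta_m$ and the reordering $T$ are only cocommutative after one further suspension, which is why everything lives in $\Sigma^2$), so one cannot simply glue strict commuting squares. Instead I would argue as in the proof of Lemma~\ref{symmetrichmpy}: the single double-suspended map $\Sigma\theta_m\colon\Sigma^2X^m\to\Sigma^2\bigvee_I X^{\wedge I}$ is $\Sigma_m$-equivariant up to homotopy by that lemma, the decomposition maps for $(X,A)^I$ are obtained from $\Sigma\theta_m$ by restriction and are natural in $I$, and the $G$-action is by a strict homeomorphism on both sides commuting with all the restriction inclusions; so the homotopy $g\circ\Sigma\theta_m\simeq\Sigma\theta_m\circ g$ restricts compatibly and, since $\K$ has finitely many faces, can be assembled (e.g. by a standard colimit-of-homotopies argument, or by working in the homotopy category where the colimit over the finite face poset of $\K$ is well behaved) into a homotopy $g\circ\theta\simeq\theta\circ g$ on $\Sigma^2(X,A)^{\K}$. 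Once this coherence bookkeeping is dispatched, the theorem follows, and I would present the proof as little more than ``combine the naturality in~\cite[Theorem 2.10]{BBCG} with Lemma~\ref{symmetrichmpy2}''.
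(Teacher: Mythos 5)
Your proposal follows essentially the same route as the paper's proof: the paper also defines the decomposition functorially over the face category of $\K$, uses diagram~\eqref{symmetricaction1} (Lemma~\ref{symmetrichmpy2}(a)) to produce a homotopy $H_g(\sigma)$ on each piece $\Sigma^2(X,A)^{\sigma}$, uses diagram~\eqref{symmetricaction2} to check these homotopies agree on intersections of simplices, and then passes to the colimit to obtain a homotopy between $g\circ\theta$ and $\theta\circ g$. Your identification of the gluing step (4)--(5) as the real content, handled via the naturality of the BBCG equivalence and the compatibility of the local homotopies, matches the paper's argument, so no further comparison is needed.
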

\begin{proof}
Let $\mathrm{CAT}(K)$ be the face category of $K$ consisting of simplices of $K$ and simplicial inclusions in $K$. 
Define two functors $D$ and $E$ from $\mathrm{CAT}(K)$ to $CW_*$ by $D(\sigma)=(X, A)^{\sigma}$ and $E(\sigma)=\underset{J\subseteq [m]}\bigvee (X, A)^{\wedge (\sigma\cap J)}$ for $\sigma\in \mathrm{CAT}(K)$.
For every $\sigma\in \mathrm{CAT}(K)$ and $g\in G$, diagram (\ref{symmetricaction1}) implies that there exists a homotopy
\[
H_g(\sigma)\colon \Sigma^2 D(\sigma)\times \mathbb{I}  \longrightarrow\Sigma^2 E(g\cdot \sigma)
\]
such that $H_g(\sigma)(x, 0)=\theta(\sigma)$ and $H_g(\sigma)(x, 1)=\theta(g\cdot \sigma)$, where $\theta(\sigma)$ is the natural homotopy equivalence between $\Sigma^2 D(\sigma)$ and $\Sigma^2 E(\sigma)$ and $\mathbb{I}$ is the interval $[0, 1]$. 
Diagram (\ref{symmetricaction2}) implies that if $\sigma, \tau \in \mathrm{CAT}(K)$,
 then $H_g(\sigma \cap \tau)=H_g(\sigma)|_{\Sigma^2 D(\sigma\cap \tau)\times \mathbb{I}}=H_g(\tau)|_{\Sigma^2 D(\sigma\cap \tau)\times \mathbb{I}}$.
$H_g(\cdot)$ induces a natural transformation from $\Sigma^2 D(\cdot) \times \mathbb{I}$ to $\Sigma^2 E(\cdot)$.

With $g$ fixed, $H_g(\sigma)$ will induce a continuous map $H_g \colon \mathrm{colim}\,\Sigma^2 D \times \mathbb{I} \longrightarrow \mathrm{colim}\,\Sigma^2 E$ such that $H_g(x, 0)=g\theta(x)$ and $H_g(x, 1)=\theta(g\cdot x)$. Therefore, $\theta$ is a homotopy $G$-decomposition.
\end{proof}

\begin{example}
Let $K$ be the $k$-skeleton of a simplex $\Delta^{m-1}$ on which $\Sigma_m$ acts by permuting vertices. By Porter \cite{Porter}, Grbi\'{c}-Theriault \cite{GT1},
the homotopy type of $(\mathrm{Cone}A, A)^K$ is the wedge 
\[
(\mathrm{Cone}A, A)^K\simeq \underset{j=k+2}{\overset{m}\bigvee} \big( \underset{1\leq i_1< \ldots <i_j\leq m}\bigvee \binom {j-1} {k+1}\Sigma^{k+1}A_{i_1} \wedge\ldots \wedge A_{i_j} \big).
\]
Although  $\Sigma_m$ acts on both sides this homotopy equivalence might not be a homotopy $\Sigma_m$-equivalence. However after suspending it twice, by Theorem~\ref{gppdec} it is a homotopy equivariant map.
\end{example}

Considering $G$-equivalence~\eqref{Geq} and observing the induced $G$-actions on the reduced homology groups, we have the following result.

\begin{theorem}\label{greppp}
Let $\K$ be a simplicial $G$-complex on $m$ vertices. Then there exists a $\bk G$-module isomorphism 
\[
\widetilde{H}_i((X, A)^{\K}; \bk)\cong \underset{J\subseteq [m]}{\bigoplus}\widetilde{H}_i((X, A)^{\wedge\K_J}; \bk)\cong \underset{J\in [m]/G}{\bigoplus}\mathrm{Ind}_{G_J}^G \widetilde{H}_i((X, A)^{\wedge\K_J}; \bk)
\]
where $G$ acts on the middle term by permuting the summands such that $g\cdot \widetilde{H}_i((X, A)^{\wedge\K_J}; \bk)=\widetilde{H}_i((X, A)^{\K_{g\cdot J}}; \bk)$, $[m]/G$ is a set of  representatives of orbit of $2^{[m]}\setminus \emptyset$ under $G$ and $G_J$ is the stabiliser of $J$.\qed
\end{theorem}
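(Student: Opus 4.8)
The plan is to deduce Theorem~\ref{greppp} directly from the homotopy $G$-decomposition established in Theorem~\ref{gppdec} by passing to reduced homology, together with the purely algebraic splitting~\eqref{Brown}. The starting point is the homotopy $G$-equivalence
\[
\theta\colon \Sigma^2 (X,A)^{\K} \simeq \Sigma^2\underset{J\subseteq [m]}{\bigvee}(X,A)^{\wedge \K_J}.
\]
First I would apply $\widetilde{H}_{i+2}(-;\bk)$ to this equivalence. Since $\theta$ is $G$-equivariant up to homotopy, it induces a $\bk G$-module isomorphism on reduced homology. By the remark in the text (using the naturality of the long exact sequence of the pair $(CY,Y)$), the double-suspension isomorphism $\widetilde{H}_{i+2}(\Sigma^2 Y;\bk)\cong \widetilde{H}_i(Y;\bk)$ is a $\bk G$-isomorphism when $G$ acts on $\Sigma^2 Y$ by suspending the action on $Y$; applying this both to $Y=(X,A)^{\K}$ and to $Y=\bigvee_{J}(X,A)^{\wedge\K_J}$ gives the first claimed isomorphism
\[
\widetilde{H}_i((X,A)^{\K};\bk)\cong \widetilde{H}_i\Big(\underset{J\subseteq [m]}{\bigvee}(X,A)^{\wedge\K_J};\bk\Big).
\]

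Next I would identify the right-hand side with $\bigoplus_{J\subseteq[m]}\widetilde{H}_i((X,A)^{\wedge\K_J};\bk)$ as a $\bk G$-module. Reduced homology of a wedge splits as a direct sum of the reduced homologies of the summands; the point is that this splitting is $G$-equivariant, where $G$ acts on the index set $2^{[m]}$ (or $2^{[m]}\setminus\emptyset$, the empty wedge summand contributing nothing) by $J\mapsto g\cdot J$ and on the individual summands by the coordinate permutation described in diagram~\eqref{symmetricaction1} of Lemma~\ref{symmetrichmpy2}. Concretely, the map $g\colon \Sigma^2(X,A)^{\wedge(I\cap J)}\to\Sigma^2(X,A)^{\wedge g\cdot(I\cap J)}$ appearing there shows that $g$ sends the $J$-summand of the wedge onto the $(g\cdot J)$-summand, and since $\K_{g\cdot J}=g\cdot\K_J$ by Lemma~\ref{subcomplex}, this induces on homology the identification $g\cdot\widetilde{H}_i((X,A)^{\wedge\K_J};\bk)=\widetilde{H}_i((X,A)^{\wedge\K_{g\cdot J}};\bk)$, which is exactly the permutation action asserted in the statement.

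Finally, the passage from $\bigoplus_{J\subseteq[m]}$ to $\bigoplus_{J\in[m]/G}\mathrm{Ind}_{G_J}^G$ is an immediate application of~\eqref{Brown} (Brown's lemma, \cite[Corollary 5.4]{KSB}): we have a $\bk G$-module $N=\bigoplus_{J\in I}N_J$ with $I=2^{[m]}\setminus\emptyset$, $N_J=\widetilde{H}_i((X,A)^{\wedge\K_J};\bk)$, and $G$ permuting the summands according to the action on $I$; taking $E=[m]/G$ a set of orbit representatives and $G_J$ the stabiliser of $J$ yields $N\cong\bigoplus_{J\in[m]/G}\mathrm{Ind}_{G_J}^G N_J$, and for the empty set (if one insists on including $J=\emptyset$ so as to match the $\bigoplus_{J\subseteq[m]}$ indexing) the corresponding reduced homology vanishes since $(X,A)^{\wedge\emptyset}$ is a point, so it may be harmlessly omitted. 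This completes the proof. The only step requiring genuine care — and hence the main obstacle — is verifying that the wedge-summand decomposition on homology is $G$-equivariant with the stated permutation and coordinate-permutation actions; but this is precisely the content packaged into Lemma~\ref{symmetrichmpy2}(a) and the functoriality used in Theorem~\ref{gppdec}, so in practice the argument is short.
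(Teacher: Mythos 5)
Your proposal is correct and follows essentially the same route the paper intends: Theorem~\ref{greppp} is stated as a direct consequence of the homotopy $G$-decomposition of Theorem~\ref{gppdec}, the remark that the suspension isomorphism is an $RG$-isomorphism, the equivariant wedge splitting coming from Lemma~\ref{symmetrichmpy2}(a) together with Lemma~\ref{subcomplex}, and the algebraic decomposition~\eqref{Brown}. Nothing essential is missing, and your observation that the $J=\emptyset$ summand contributes nothing is the right way to reconcile the two indexings.
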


\section{Representation stability for polyhedral products} \label{rspp}

Let $G$ be a finite group and $\bk$ be a field of characteristic zero. Then a $G$-action on a simplicial complex $K$ induces a $G$-complex structure on the corresponding polyhedral product $(X, A)^{K}$ and therefore its homology is a $\bk G$-module.
 Since every $\bk G$-module is a $G$-representation over $\bk$, we are able to use representation theory to study the homology groups of polyhedral products associated with simplicial $G$-complexes. 
Representation stability studies  a sequence of finite dimensional vector spaces such that each vector space $V_m$ is equipped with a $G_m$-action and each $V_m\overset{\psi_m}\longrightarrow V_{m+1}$ is $G_m$-equivariant.  
Here groups ${G_m}$ are not arbitrary; they all belong to a fixed family of groups whose $\bk$-linear irreducible representations are determined by some datum $\lambda$ which is independent of $G_m$ and therefore of $m$.  One such family consists of symmetric groups $\Sigma_m$, which we will consider in this section.
The idea of representation stability was firstly introduced by Church and Farb in~\cite[Section 2.3]{CF}.
Stability in representation theory generalises a classical homological stability. 
A sequence $\{Y_m\}$ of groups, manifolds or topological spaces with maps $Y_m \overset{\psi_m}\longrightarrow Y_{m+1}$
for each $i\geq 0$ is called homology stable if the map $H_i(Y_m) \overset{(\psi_m)_*}\longrightarrow H_i(Y_{m+1})$ is an isomorphism for a sufficiently large $m$.

We recall the precise definition of uniformly representation stability of representations of symmetric groups according to Church and Farb~\cite[Definition 2.6]{CF}. Let $\{V_m, \psi_m\}$ be a sequence of  $\Sigma_m$-representations so that the group $\Sigma_m$ acts on $V_{m+1}$ as a subgroup of $\Sigma_{m+1}$. Then it is \emph{consistent} if each $V_m$ decomposes as a direct sum of finite-dimensional irreducible representations. If given any partition $\lambda=(\lambda_1, \ldots, \lambda_l)\vdash k$, then for $m\geq \lambda_1+k$, the partition $\lambda[m]=(m-k, \lambda_1, \ldots, \lambda_l)$ is called \emph{padded partition}. Its corresponding irreducible representation is denoted by $V(\lambda)_m$.
 
Let now $\{V_m, \psi_m\}$ be a consistent sequence of $\Sigma_m$-representations over a field $\bk$ of characteristic 0. The sequence $\{V_m, \psi_m\}$ is \emph{uniformly representation stable} with stable range $m\geq N$ if each of the following conditions holds for all $m\geq N$.

1. Injectivity: The natural map $\psi_m\colon V_m \to V_{m+1}$ is injective.

2. Surjectivity: The $\Sigma_{m+1}$-orbit of $\psi_m(V_m)$ spans $V_{m+1}$.

3. Multiplicities (uniform): Decompose $V_m$ into irreducible representations as 
\[
V_m=\underset{\lambda}{\bigoplus} c_{\lambda, m} V(\lambda)_m
\]
with multiplicities $0\leq c_{\lambda, m}\leq \infty$. There is some $M$, not depending on $\lambda$, so that for $m\geq M$ the multiplicities $c_{\lambda, m}$ are independent of $m$ for all $\lambda$.

Hemmer~\cite{He} proved the uniform representation stability of $\Sigma_m$-representations $\{\mathrm{Ind}_{H\times \Sigma_{m-k}}^{\Sigma_m} V\boxtimes \bk\}$ induced by an $H$-representation  $V$, where $H\leq \Sigma_k$. Note that $V$ can be seen as an $(H \times \Sigma_{m-k})$-representation, where $\Sigma_{m-k}$ acts on $V$ trivially, denoted by $V\boxtimes \bk$  for any $m \geq k$.

In this section, we study the representation stability arising in polyhedral products over a sequence of finite simplicial $\Sigma_m$-complexes.

\begin{definition}
A sequence of finite simplicial complexes
\[
\K_0\subseteq\K_1 \subseteq \K_2 \subseteq \ldots \subseteq \K_m \subseteq \K_{m+1} \subseteq \ldots
\]
where $K_0=\emptyset$ and each $\K_m$ is a simplicial $\Sigma_m$-complex and the simplicial inclusion $i_m\colon \K_m\subseteq \K_{m+1}$ is $\Sigma_m$-equivariant ($\Sigma_m$ acts on $\K_{m+1}$ via $\Sigma_m\hookrightarrow \Sigma_{m+1}$) is called a \emph{consistent sequence}.
\end{definition}

We start by considering few families of consistent sequences of finite simplicial complexes. The main aim of the paper is to show that these consistent sequences induce the consistent sequence of $\Sigma_m$-representations of the homology of polyhedral products which are representation stable. 

\begin{example}\label{fi1}  ($k$-skeleton sequences)
Fix an integer $k\geq 0$. To each $m$ assign the $k$-skeleton $\Delta_m^k$ of a standard $(m-1)$-simplex,
\begin{equation}
\label{simplexsq}
\emptyset\subseteq \Delta_1^k \subseteq \ldots \subseteq \Delta_m^k \subseteq \Delta_{m+1}^k \subseteq \ldots.
\end{equation}
The action of $\Sigma_m$ on $K_m$ is induced by permutations of all $m$ vertices. Each simplicial inclusion $i_m\colon \Delta^k_m\longrightarrow\Delta^k_{m+1}$ is $\Sigma_m$-equivariant. Therefore~\eqref{simplexsq} is consistent.
\end{example}

In general, if $K$ and $L$ are simplicial $G$-complexes on $V(K)$ and $V(L)$ respectively, then the $G$-action can be extended to the join $K*L$, as a complex on $V(K)\cup V(L)$  vertices, diagonally. 
\begin{construction}\label{fi2}
Fix integers $s\geq 1$ and $k_1, \ldots, k_s \geq 0$. For each $m\geq 0$, let $\K_m$ be a simplicial complex on $sm$ vertices given by the join of $\Delta_m^{k_1}, \Delta_m^{k_2}, \ldots, \Delta_m^{k_s}$. Since each $\Delta_m^{k_i}$ is a simplicial $\Sigma_m$-complex, then $\K_m$ is also a simplicial $\Sigma_m$-complex with the $\Sigma_m$-action given by $g\cdot(\sigma_1\sqcup \ldots \sqcup \sigma_s)=g\cdot\sigma_1\sqcup \ldots \sqcup g\cdot\sigma_s$ for $g\in \Sigma_m$ and for each $\sigma_i\in \Delta_m^{k_i}$. Let us consider the sequence
\[
\emptyset\subseteq \Delta_1^{k_1}*\ldots*\Delta_1^{k_s} \subseteq \ldots \subseteq \Delta_m^{k_1}*\ldots*\Delta_m^{k_s} \subseteq\Delta_{m+1}^{k_1}*\ldots* \Delta_{m+1}^{k_s} \subseteq \ldots.
\]
 The inclusion $K_m\subseteq K_{m+1}$ is given as a join of coordinate $\Sigma_m$-equivariant inclusions $\Delta_m^{k_i} \subseteq \Delta_{m+1}^{k_i}$ and therefore it is $\Sigma_m$-equivariant.
 
 Notice that for $s=1$ we recover the family of $k$-skeleton sequences of Example~\ref{fi1}.
\end{construction}

Next we construct a non-tivial example of consistent sequence of finite simplicial $\Sigma_m$-complexes.
\begin{construction}\label{fi3}
Let $I^m$ be an $m$-cube. Consider the simplicial complex $\K_m$ obtained by taking the boundary of the dual of a simple polytope $vc(I^m)$, where $vc(I^m)$ is obtained by cutting a vertex from $I^m$. Note that $K_1$ consists of two disjoint points. $\K_m$ can also be constructed as follows. Let $S_{2m}=S_1^0 * \ldots *S_m^0$ be the join of $m$ copies of two disjoint points, where $S_i^0=\{0_i, 1_i\}$. Notice that $S_{2m}$ is a triangulation of an $(m-1)$-sphere on $2m$ vertices. Then $K_m$ is obtained from $S_{2m}$ by deleting the interior of the $(m-1)$-face on vertices $0_1, \ldots, 0_m$ and taking the cone on it. The natural $\Sigma_m$-action on $\K_m$ is given by $g\cdot 0_i=0_{g\cdot i}$, $g\cdot 1_i=1_{g\cdot i}$, and $g$ fixes the cone vertex.
The inclusions $K_m\subseteq K_{m+1}$ are induced by the inclusions $S_1^0 * \ldots *S_m^0\subseteq S_1^0 * \ldots *S_m^0*S_{m+1}^0$ with the cone vertex mapping to itself. 

For example, when $m=2$, $K_2$ is simplicially isomorphic to a pentagon, and the $\Sigma_2$-action on $K_2$ is given by $0_1$ mapping to $0_2$ and $1_1$ mapping to $1_2$ keeping the cone vertex fixed.
As shown in the picture below, the blue colour lines represent how $K_2$ is included into $K_3$. 
\end{construction}

\begin{center}
\begin{tikzpicture}[scale=1.8]
\tikzstyle{point}=[circle,thick,draw=black,fill=black,inner sep=0pt,minimum width=2pt,minimum height=0pt]
\draw[thick] (0,1)--(-0.9510565163,0.309017)--(-0.58778525229,-0.809017)--(0.58778525229,-0.809017)--(0.9510565163,0.309017)--cycle;
\node [point, label=above:\small{$*$}] at (0,1) {};
\node [point,orange, label=right:\small{$0_2$}] at (0.9510565163,0.309017) {};
\node[label=right:$\longrightarrow$] at(1.54,0.309017){};
\node [point, green, label=below:\small{$1_1$}] at (0.58778525229,-0.809017) {};
\node [point, green, label=left:\small{$1_2$}] at (-0.58778525229,-0.809017) {};
\node [point, orange, label=left:\small{$0_1$}] at (-0.9510565163,0.309017) {};
\node[align=center, below] at (0,-1.25)%
{$K_2$}; 
\end{tikzpicture}
\begin{tikzpicture} [thick,scale=4.6]
\tikzstyle{point}=[circle,thick,draw=black,fill=black,inner sep=0pt,minimum width=2pt,minimum height=0pt] 
\node (a1)[point, orange, label=left:\small{$0_1$}] at (0,0) {}; 
\node (b2)[point, green, label=right:\small{$1_2$}] at (0.6,0.2) {}; 
\node (b1)[point, green, label=right:\small{$1_1$}] at (1,0) {};
\node (a2)[point, orange, label=left:\small{$0_2$}] at (0.4,-0.2) {};
\node (a3)[point, orange, label=left:\small{$0_3$}] at (0.5,0.5) {};
\node (b3)[point, green, label=left:\small{$1_3$}] at (0.5,-0.5) {};
\node (c)[point, label=left:\small{$*$}] at (0.3, 0.15) {};
\begin{scope}[thick,dashed,,opacity=0.6]
\draw[blue, opacity=0.8] (a1) -- (b2) -- (b1);
\draw[opacity=0.6] (a3) -- (b2) -- (b3);
\end{scope}
\draw[blue,opacity=0.8] (c)--(a1);
\draw[blue, opacity=0.8] (c)--(a2);
\draw[blue, opacity=0.8] (b1) -- (a2);
\draw[opacity=0.6] (c)--(a3);
\draw[opacity=0.6] (a2) -- (a3);
\draw[opacity=0.6] (a2) -- (b3);
\draw[opacity=0.6] (a1) -- (a2) -- (a3);
\draw[fill=green, opacity=0.6] (a1) -- (a2) -- (b3);
\draw (a3) -- (a1) -- (b3) -- (b1) --(a3); 
\node[align=center, below] at (0.5,-0.6)%
{$K_3$};
\end{tikzpicture} 
\begin{tikzpicture}
\node[label=right: $\text{\small{Vertices with the same color belong to the same orbit of symmetric actions.}}$] at(-0.3,-0.7){};
\end{tikzpicture}
\end{center}


\begin{definition}\label{facestable}
Given an integer $r\geq 0$, a consistent sequence $\mathcal K=\{\K_m, i_m\}$ of finite simplicial $\Sigma_m$-complexes is called \emph{$r$-face-stable} at degree $d$
if for $m\geq d$ and every $\sigma\in K_m$ with $\mathrm{dim} ~\sigma=r$ there exist a $g\in \Sigma_m$ and $\tau \in K_{d}$ such that $g\cdot i_{d, m}(\tau)=\sigma$, where $i_{d, m}=i_m\circ \ldots \circ i_d$ is a composite of the inclusions $i_d, \ldots, i_m$.

Similarly, a consistent sequence $\mathcal K=\{\K_m, i_m\}$ of finite simplicial $\Sigma_m$-complexes is called \emph{$r$-vertex-stable} at degree $d$
if for $m\geq d$ and any collection $\{v_0, \ldots, v_r\}$ of $r+1$ vertices of $K_m$ there exist a $g\in \Sigma_m$ and a collection $\{u_0, \ldots, u_r\}$ of $r+1$ vertices in $K_{d}$ such that $g\cdot i_{d, m}(u_i)=v_i$. In particular, if $r=0$ then $\mathcal K$ is called \emph{vertex-stable}.

If a consistent sequence $\{\K_m, i_m\}$ of finite simplicial $\Sigma_m$-complexes is $r$-vertex-stable (resp. $r$-face-stable) for every $r\geq 0$, we call it \emph{completely surjective} (resp. \emph{simplicially surjective}).
\end{definition}

\begin{bigremark}Note that Construction~\ref{fi2} and Construction~\ref{fi3} are completely surjective.

(i) Let $K_m=\Delta_{m}^k$. For $r\geq 0$, let $E_{m, r+1}$ consist of all the subsets of $[m]$ with cardinality $r+1$. Then the transitivity of  $\Sigma_m$-action on $E_{m, r+1}$ implies that $\{K_m\}$ is $r$-vertex-stable at degree $r+1$. 


(ii) Let $K_m=\Delta_m^{k_1} * \ldots *\Delta_m^{k_s}$.
If $s=2$, then $\Delta_m^{k_1}* \Delta_m^{k_2}$ is $r$-vertex-stable at degree $r+1$. Let $J_1, J_2$ be two subsets of $[m]$ with $|J_1|+|J_2|=r+1$ and $m\geq r+1$. If $J_1\cap J_2\neq \emptyset$, $J_1\cap J_2$ can be seen as a subset of vertices  of $\Delta_m^{k_1}$ and $\Delta_m^{k_2}$, respectively. Let $J_1^{c}=J_1\setminus J_1\cap J_2$ and $J_2^{c}=J_2\setminus J_1\cap J_2$ with cardinalities $r_1$ and $r_2$ and let $r_0=|J_1\cap J_2|$.

Define $g\in \Sigma_m$ by sending $\{1, \ldots, r_0\}$ to $J_1\cap J_2$, $\{r_0+1, \ldots, r_0+r_1\}$ to $J_1^c$ and $\{r_0+r_1+1, \ldots, r_0+r_1+r_2\}$ to $J_2^c$
and the complement of $\{1, \ldots, r_0+r_1+r_2\}$ in $[m]$ to the complement of $J_1\cup J_2$ in $[m]$, respecting to the initial order of vertices.

Now take the subset of vertices $\{1, \ldots, r_0+r_1\}$  of $\Delta_{r+1}^{k_1}$ and the subset of vertices $\{1, \ldots, r_0, r_0+r_1+1, \ldots, r_0+r_1+r_2  \}$ of $\Delta_{r+1}^{k_2}$
satisfying $g\cdot (\{1, \ldots, r_0+r_1\}\sqcup \{1, \ldots, r_0, r_0+r_1+1, \ldots, r_0+r_1+r_2 \})=J_1\sqcup J_2$.

If $J_1\cap J_2=\emptyset$, then $r_0=0$ and $g\in \Sigma_m$ sending $\{1, \ldots, r_1\}$ to $J_1$ and $\{r_1+1, \ldots, r_1+r_2\}$ to $J_2$
and the complement of $\{1, \ldots, r_1+r_2\}$ in $[m]$ to the complement of $J_1\cup J_2$ in $[m]$.  Inductively, $K_m$ is completely surjective.

(iii) For any $r\geq 0$, $K_m=\partial vc(I^m)^*$ is $r$-vertex-stable at degree $d=r+1$. With $m\geq d$, let $J$ be a subset of vertices of $K_m$ and $|J|=r+1$. Write $J=J_*\sqcup J_1 \sqcup \ldots \sqcup J_m$, where $J_*$ is either empty or the cone vertex $\{*\}$ and each $J_i\subseteq \{0_i, 1_i\}$. Since $|J|=r+1$, there are at most $r+1$ nonempty  components of $J$, say $J_{t_1}, \ldots, J_{t_{r+1}}$. If $*\notin J$, define $g\in \Sigma_m$ by sending $i$ to $t_i$ if $i\leq r+1$ and to $k_{i-r-1}$ otherwise where $\{k_1, \ldots, k_{m-r-1}\}$ is the complement of $\{t_1, \ldots, t_{r+1}\}$ in $[m]$. Now let $J^{\prime}=J_{1}^{\prime}\sqcup \ldots\sqcup J_{r+1}^{\prime}$ from the vertex set of $K_{r+1}$ where $J_{i}^{\prime}$ contains $0_i$ or $1_i$ if and only if $J_{t_i}$ contains $0_{t_i}$ or $1_{t_i}$. If $*\in J$, consider $\widetilde{J}=J\setminus \{*\}$ and repeat the above procedure to find $g\in \Sigma_m$ and $\widetilde{J}^{\prime}\in \mathrm{Ver}(K_{r+1})$ for $\widetilde{J}$. Then let $J^{\prime}=J_*\sqcup \widetilde{J}^{\prime}$ and $g\cdot J^{\prime}=J$.
\end{bigremark}



By Theorem~\ref{greppp}, for a simplicial $G$-complex K on $m$ vertices
\begin{equation}\label{summand}
\widetilde{H}_i((X, A)^{\K}; \bk)\cong  \underset{J\in [m]/G}{\bigoplus}\mathrm{Ind}_{G_J}^G \widetilde{H}_i((X, A)^{\wedge K_J}; \bk).
\end{equation}

If a consistent sequence $\{K_m,i_m\}$ of $\Sigma_m$-complexes  $K_m$ on the vertex set $V(K_m)$ is completely surjective then the summands in~\eqref{summand} do not depend on $m$ for sufficiently large $m$.  We shall use Hemmer's result to study the uniformly representation stability of polyhedral products. For that  the stabiliser $(\Sigma_m)_J$ needs to be of the form $H\times\Sigma_{m-k}$ for some $H\leq\Sigma_k$. Therefore we proceed by studying the stabiliser of $J\in\mathcal P(V(K_m))$ in $\Sigma_m$ which we denote by $\stab(J,m)$.

Observe that for a fixed integer $d$, for all $m\geq d$ and for some $J\in \mathcal{P}(V(\K_d))$, as $J$ also belongs to the $\Sigma_m$-set $\mathcal{P}(V(\K_m))$, there is a sequence of stabilisers 
\[
\scriptsize{\xymatrix{
\ldots \ar[r]  & \Sigma_m\ar[r]  &\Sigma_{m+1}\ar[r] &\ldots \\
\ldots \ar[r] & \stab(J, m)\ar[u]\ar[r]& \stab(J, m+1)\ar[u]\ar[r] &\ldots .
}}
\]
For instance, in Example~\ref{fi1}, if $m\geq |J|$, then $J\in \mathcal{P}(V(\Delta_m^k))$ and $\stab(J, m)=\Sigma_{|J|}\times \Sigma_{m-|J|}$. 

In Construction~\ref{fi2}, $K_{m}=\Delta_m^{k_1}\ast \Delta_m^{k_2}\ast \ldots\ast \Delta_m^{k_s}$. Write $J$ as a disjoint union of $J_1, \ldots, J_s$, where each $J_t$ ($1\leq t\leq s$) is from the $t$-th component $\mathcal{P}(V(\Delta_m^{k_t}))$. 
Let $b(J)=\underset{1\leq t\leq s}{\mathrm{max}}~|J_t|$. For $m\geq b(J)$, we observe the stabilisers of $J$ in $\Sigma_m$,
\[
\stab(J, m)=\{g\in \Sigma_{m}\mid g\cdot J_t=J_t, 1\leq t\leq s\}=\underset{1\leq t\leq s}{\bigcap}\stab(J_t, m)
\]
where, as in Example~\ref{fi1}, each $\stab(J_t, m)$ is isomorphic to $\Sigma_{|J_t|}\times \Sigma_{m-|J_t|}$.
For integers $a\leq b\leq m$, we have
\[
\Sigma_a\times \Sigma_{m-a} \cap \Sigma_b\times \Sigma_{m-b}= \Sigma_a \times \Sigma_{b-a} \times \Sigma_{m-b}.
\]
Therefore $\stab(J, m)=\stab(J, b(J))\times \Sigma_{m-b(J)}$ for $m\geq b(J)$, and $\Sigma_{m-b(J)}$ acts on $J$ trivially.


We call such sequences \emph{stabiliser consistent}.
\begin{definition}\label{nicestable}
A consistent sequence $\mathcal K=\{\K_m, i_m\}$ of finite simplicial $\Sigma_m$-complexes is called \emph{stabiliser consistent} if for every $d$ and every finite set $J\in \mathcal{P}(V( K_d))$ there exists an integer $b(J)$, such that if $m\geq d$ and $m\geq b(J)$, then either $\Sigma_m$ acts on $J$ trivially or the stabiliser $\stab(J, m)$ is isomorphic to $\stab(J, b(J))\times \Sigma_{m-b(J)}$, where $\Sigma_{m-b(J)}$ acts on $J$ trivially.
\end{definition}

Construction~\ref{fi3} also provides a stabiliser consistent sequence. 
If $J\in \mathcal{P}(V(K_d))$ for some $d$, write $J=J_{\ast} \sqcup J_1 \sqcup \ldots \sqcup J_d$, where $J_{\ast}$ is either empty or $\{\ast\}$ and each $J_i\subseteq \{0_i, 1_i\}$. Since $\Sigma_m$ acts on $\ast$ trivially, $\stab(J, m)=\stab(\tilde{J}, m)$ where $\tilde{J}=J_1 \sqcup \ldots \sqcup J_d$.
Let $b(J)$ be the number of non-empty components $J_t$. Then for $m\geq b(J)$, $\stab(J, m)\cong \stab(J, b(J))\times \Sigma_{m-b(J)}$ where $\Sigma_{m-b(J)}$ acts on $J$ trivially.

As a consequence, we have the following result that states conditions on a sequence of finite simplicial complexes that will induce in homology a uniformly representation stable sequence.
\begin{theorem}\label{thmrrpp}
Let $\{\K_m, i_m\}$ be a consistent sequence of finite simplicial complexes and $X$ be a connected, based $CW$-complexes of finite type with a based subcomplex $A$.
Suppose that $\{\K_m, i_m\}$ is completely surjective and stabiliser consistent.
Then the consistent sequence of $\Sigma_m$-representations $\{\widetilde{H}_i((X, A)^{\K_m}; \bk), i_{m_*}\}$ for $\mathrm{char}\bk=0$ is uniformly representation stable.
\end{theorem}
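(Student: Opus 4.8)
The strategy is to verify the three conditions of uniform representation stability (injectivity, surjectivity, and uniform multiplicity stability) for the sequence $\{\widetilde{H}_i((X,A)^{\K_m};\bk),i_{m_*}\}$ by reducing everything to the explicit direct sum decomposition of Theorem~\ref{greppp}, and then applying Hemmer's theorem~\cite{He} summand by summand. First I would fix $i$ and write, using \eqref{summand},
\[
\widetilde{H}_i((X,A)^{\K_m};\bk)\cong\underset{J\in V(\K_m)/\Sigma_m}{\bigoplus}\mathrm{Ind}_{\stab(J,m)}^{\Sigma_m}\widetilde{H}_i((X,A)^{\wedge \K_{m,J}};\bk),
\]
where each $\K_{m,J}$ is the full subcomplex on the finite vertex set $J$. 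The key observation is that, because $\{\K_m,i_m\}$ is \emph{completely surjective}, every $\Sigma_m$-orbit of a finite subset $J$ of $V(\K_m)$ is already represented by a subset sitting inside some fixed $\K_d$ (with $d=|J|$), and for such $J$ the full subcomplex $\K_{m,J}$ and hence $\widetilde{H}_i((X,A)^{\wedge \K_{m,J}};\bk)$ is independent of $m$ once $m\ge d$. Thus the \emph{set} of summands appearing (indexed by orbits) and the $\stab(J,b(J))$-module structure on each stabilised factor $\widetilde{H}_i((X,A)^{\wedge\K_J};\bk)$ stabilise. Moreover since $X$, and hence $(X,A)^{\wedge\K_J}$, is of finite type, each such factor is finite-dimensional, so only finitely many orbits $J$ contribute in a fixed homological degree $i$ (those with $|J|$ bounded in terms of $i$), making the direct sum finite in each degree.

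Next, using \emph{stabiliser consistency}, for each contributing $J$ with $b=b(J)$ I can write $\stab(J,m)\cong \stab(J,b)\times\Sigma_{m-b}$ with $\Sigma_{m-b}$ acting trivially on the factor $\widetilde{H}_i((X,A)^{\wedge\K_J};\bk)$ (or $\Sigma_m$ acts trivially on $J$ altogether, a degenerate case handled directly). Setting $H=\stab(J,b)\le\Sigma_b$ and $V=\widetilde{H}_i((X,A)^{\wedge\K_J};\bk)$ viewed as an $(H\times\Sigma_{m-b})$-representation $V\boxtimes\bk$, the $J$-summand is exactly $\mathrm{Ind}_{H\times\Sigma_{m-b}}^{\Sigma_m}V\boxtimes\bk$, which is Hemmer's uniformly representation stable sequence. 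I would then need to check that the maps $i_{m_*}$ induced on homology are, under the decomposition of Theorem~\ref{greppp}, compatible with the standard inclusions of these induced representations used by Hemmer — this follows because the homotopy $\Sigma_m$-decomposition \eqref{Geq} is natural with respect to the $\Sigma_m$-equivariant inclusions $i_m\colon\K_m\hookrightarrow\K_{m+1}$ (Theorem~\ref{gppdec} plus \cite[Theorem 2.10]{BBCG}), so $i_{m_*}$ respects the orbit decomposition and restricts on each factor to the canonical map. A finite direct sum of uniformly representation stable sequences is uniformly representation stable (injectivity and surjectivity are inherited summandwise; for the multiplicity condition one takes the maximum of the finitely many stable ranges $M_J$), which yields the conclusion.

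\textbf{Main obstacle.} The routine part is the bookkeeping with orbits and stabilisers; the genuinely delicate step is verifying that the connecting maps $i_{m_*}$ match the maps in Hemmer's sequences \emph{at the level of the orbit decomposition}. Concretely, an orbit representative $J\subseteq V(\K_m)$ need not be literally preserved by $i_m$ as a chosen representative in $V(\K_{m+1})/\Sigma_{m+1}$, so one must argue that replacing it by a $\Sigma_{m+1}$-translate does not change the induced representation and that the comparison isomorphisms can be chosen coherently in $m$ — this is where complete surjectivity (guaranteeing a stable system of representatives) and the naturality of the BBCG decomposition are both essential. I would also need to take a little care with the edge case where some component $J_\ast$ (e.g. the cone vertex in Construction~\ref{fi3}) is fixed by all of $\Sigma_m$, so that the corresponding summand is a constant sequence of representations, which is trivially uniformly representation stable. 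Once these compatibilities are in place, the theorem follows by assembling the finitely many stable summands.
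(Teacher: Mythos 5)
Your overall route coincides with the paper's: decompose $\widetilde H_i((X,A)^{\K_m};\bk)$ via Theorem~\ref{greppp} into induced representations indexed by orbit representatives $J$, use complete surjectivity to stabilise the index set, use stabiliser consistency to write each stabiliser as $\stab(J,b(J))\times\Sigma_{m-b(J)}$ (handling the degenerate case of a trivial $\Sigma_m$-action on $J$ separately), and apply Hemmer's theorem summand by summand before assembling the finitely many stable pieces. In that respect the proposal is faithful to the paper's argument.

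There is, however, one genuine gap: your justification that only finitely many orbits contribute in a fixed homological degree $i$. You deduce this from $X$ (hence each factor) being of finite type, but finite type only gives finite dimensionality of each individual summand; it says nothing about how many orbits $J$ contribute. Without a bound on $|J|$ the orbit set genuinely grows with $m$ (there are orbits of subsets of cardinality comparable to $m$), so complete surjectivity alone would not make the decomposition independent of $m$. The claim you relegate to a parenthesis --- that only $J$ with $|J|$ bounded in terms of $i$ contribute --- is precisely the point requiring proof, and it is where the paper does real work: it shows $\widetilde H_i((X,A)^{\wedge\K_{m,J}};\bk)=0$ whenever $|J|\ge i+1$, first for a single simplex via the reduced K\"{u}nneth formula (a smash product of $|J|$ connected spaces has no reduced homology in degrees below $|J|$), and then for the whole polyhedral smash product by an inductive Mayer--Vietoris argument over the pieces $(X,A)^{\wedge\sigma}$, $\sigma\in\K_{m,J}$. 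With this vanishing, only orbits with $|J|\le i$ matter, and then vertex-stability in the relevant range stabilises the finite index set; supply that lemma and your argument closes. Your additional concern about matching the connecting maps $i_{m_*}$ with the standard maps in Hemmer's sequences is reasonable --- the paper handles it only implicitly through the naturality of the decomposition --- but it is not the missing ingredient; the vanishing lemma is.
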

\begin{proof}
By Theorem~\ref{greppp}, we have
\begin{equation}\label{rrpp}
\widetilde{H}_i((X, A)^{\K_m}; \bk)\cong \underset{J\in E_m}{\bigoplus}\mathrm{Ind}_{\stab(J, m)}^{\Sigma_m} \widetilde{H}_i((X, A)^{\wedge \K_{m, J}}; \bk)
\end{equation}
where $E_m$ is a set of representatives of $\mathcal{P}(V(\K_m))$ under the action $\Sigma_m$, and $\stab(J, m)$ is the stabiliser of $J$ under $\Sigma_{m}$.

We prove that if $|J|\geq i+1$ then $\widetilde{H}_i((X, A)^{\wedge \K_{m, J}}; \bk)$ is trivial. By the reduced K\"{u}nneth formula for path-connected spaces, it is obvious that $\widetilde{H}_i(Y_1\wedge \ldots \wedge Y_{|J|}; \bk)=0$ if $|J|\geq i+1$, where each $Y_i$ is either $X$ or $A$.
This implies that for any $\sigma\in \K_{m, J}$, $\widetilde{H}_i((X, A)^{\wedge \sigma}; \bk)=0$ if $|J|\geq i+1$. 
If $X_1$ and $X_2$ are connected $CW$-complexes with a non-empty intersection such that $\widetilde{H}_i(X_1; \bk)=\widetilde{H}_i(X_2; \bk)=\widetilde{H}_i(X_1\cap X_2; \bk)=0$ for $i\leq l$, then $\widetilde{H}_i(X_1\cup X_2; \bk)=0$ for $i\leq l$.
As $(X, A)^{\K_{m, J}}$ is a union of $(X, A)^{\sigma}$ over all $\sigma\in \K_{m, J}$, inductively $\widetilde{H}_i((X, A)^{\wedge \K_J}; \bk)$ is trivial if $|J|\geq i+1$.

Since $\{\K_m, i_m\}$ is completely surjective, if $|J|\leq i$ there exists an integer $N\geq 1$ such that if $m\geq N$, we have $E_{m+1, i}=E_{m, i}=\ldots =E_{N, i}$, where $E_{m, i}=\{J\in E_{m}\mid |J|\leq i\}$.
Therefore the summands in~(\ref{rrpp}) do not depend on $m$ for $m\geq N$. 
On the other hand, for each $J\in E_{*}$ there exists an integer $b(J)$ such that for $m\geq b(J)$, either $\Sigma_m$ acts on $J$ trivially or the stabiliser $\stab(J, m)=\stab(J, b(J)) \times \Sigma_{m-b(J)}$, where $\Sigma_{m-|J|}$ acts on $J$ trivially. In the first case, if $\Sigma_m$ acts on $J$ trivially for $m\geq b(J)$, then for any $k\leq b(J)$, $\Sigma_k$ acts on $J$ trivially because $\Sigma_k$ acts on $J$ as a subgroup of $\Sigma_{b(J)}$. As the vertex support set $J$ of $K_{m, J}$ is fixed, the space $(X, A)^{\wedge K_{m, J}}$ will stay the same when $m$ increases. Thus, $\widetilde{H}_i((X, A)^{\wedge \K_{m, J}}; \bk)$ is a fixed finite-dimensional trivial $\Sigma_m$-representation even though $m$ varies. It follows that $\{\widetilde{H}_i((X, A)^{\wedge \K_{m, J}}; \bk)\}$ is uniformly representation stable.

If $\stab(J, m)=\stab(J, b(J)) \times \Sigma_{m-b(J)}$, then $\widetilde{H}_i((X, A)^{\wedge\K_{m, J}}; \bk)$ is a $\stab(J, m)$-representation with a trivial $\Sigma_{m-b(J)}$-action.
By~\cite{He}, we have that $\mathrm{Ind}_{\stab(J, m)}^{\Sigma_m} \widetilde{H}_i((X, A)^{\wedge \K_{m, J}}; \bk)$ is uniformly representation stable. 

Therefore, the sequence of $\Sigma_m$-representations $\{\widetilde{H}_i((X, A)^{\K_m}; \bk), i_{m_*}\}$ is uniformly representation stable as the summands do not depend on $m$ eventually.
\end{proof}

\begin{bigremark}In general, we require the simplicial maps $i_m$ in the consistent sequence of finite simplicial complexes to be inclusions so that they induce maps of polyhedral products. However, in the case when $(X,A)$ is a pair of topological monoids, as it is for moment-angle complexes when $(X, A)=(D^2, S^1)$, any $\Sigma_m$-simplicial map, not necessary a simplicial inclusion, can be chosen for $i_m$. A simplicial map $f\colon \K\longrightarrow L$ induces a continuous map $(X, A)^{\K} \longrightarrow (X, A)^{L}$ defined by $(x_1, \ldots, x_p)=(y_1, \ldots, y_q)$, where $y_j=\underset{i\in f^{-1}(j)}\prod x_i$. Here $p$ and $q$ are the number of vertices of $\K$ and $L$, respectively.
\end{bigremark}


We have proved that the sequences in Constructions~\ref{fi2} and~\ref{fi3} are completely surjective and stabiliser consistent. 
Applying Theorem~\ref{thmrrpp}, we conclude the following statement.
\begin{corollary}\label{rsconstructions}
Let $\mathcal K$ be one of the consistent sequences in Constructions~\ref{fi2} and~\ref{fi3}
and $X$ be a connected, based $CW$-complexes of finite type with a based subcomplex $A$. 
Then the consistent sequence of $\Sigma_m$-representations $\{\widetilde{H}_i((X, A)^{\K_m}; \bk), i_{m_*}\}$ for $\mathrm{char}\bk=0$  is uniformly representation stable.~\qed
\end{corollary}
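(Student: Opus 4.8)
The plan is to deduce Corollary~\ref{rsconstructions} directly from Theorem~\ref{thmrrpp} by verifying its two hypotheses for each of the families in Constructions~\ref{fi2} and~\ref{fi3}. Since Theorem~\ref{thmrrpp} already packages the entire analytic content — the vanishing of $\widetilde{H}_i((X,A)^{\wedge \K_{m,J}};\bk)$ once $|J|\geq i+1$, the eventual independence of the orbit representatives $E_{m,i}$ on $m$, and the application of Hemmer's result~\cite{He} to the induced representations — all that remains is to check that the sequences under consideration are \emph{completely surjective} and \emph{stabiliser consistent}. Both of these have in fact already been established in the discussion preceding the statement (the Remark following Definition~\ref{facestable} and the paragraphs following Definition~\ref{nicestable}), so the proof is essentially a citation of those verifications.

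Concretely, I would proceed as follows. First, recall that for $\K_m=\Delta_m^{k_1}\ast\cdots\ast\Delta_m^{k_s}$ of Construction~\ref{fi2}, complete surjectivity is exactly part~(ii) of the Remark after Definition~\ref{facestable}: given any collection of $r+1$ vertices of $\K_m$, distributed among the $s$ join factors, one constructs an explicit $g\in\Sigma_m$ carrying a corresponding collection of vertices of $\K_{r+1}$ onto it, using transitivity of the $\Sigma_m$-action on subsets of $[m]$ of fixed cardinality within each factor. Second, stabiliser consistency for this family is the computation carried out right after Definition~\ref{nicestable}: writing $J=J_1\sqcup\cdots\sqcup J_s$ with $J_t\subseteq V(\Delta_m^{k_t})$ and $b(J)=\max_t|J_t|$, one has $\stab(J,m)=\bigcap_t(\Sigma_{|J_t|}\times\Sigma_{m-|J_t|})$, and the elementary identity $\Sigma_a\times\Sigma_{m-a}\cap\Sigma_b\times\Sigma_{m-b}=\Sigma_a\times\Sigma_{b-a}\times\Sigma_{m-b}$ for $a\leq b\leq m$ gives $\stab(J,m)\cong\stab(J,b(J))\times\Sigma_{m-b(J)}$ with the last factor acting trivially on $J$. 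Third, for $\K_m=\partial vc(I^m)^*$ of Construction~\ref{fi3}, complete surjectivity is part~(iii) of the same Remark (the cone vertex is fixed, and one permutes the $m$ pairs $\{0_i,1_i\}$ to realise any prescribed $r+1$-vertex configuration), and stabiliser consistency is the paragraph after Definition~\ref{nicestable}: $\stab(J,m)=\stab(\tilde J,m)$ with $\tilde J=J\setminus\{\ast\}$, and $b(J)$ the number of non-empty pair-components yields $\stab(J,m)\cong\stab(J,b(J))\times\Sigma_{m-b(J)}$.

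Having recorded that both families satisfy the two hypotheses, I would simply invoke Theorem~\ref{thmrrpp} with the given $(X,A)$ — $X$ connected, based, of finite type, with based subcomplex $A$ — and $\bk$ a field of characteristic zero, to conclude that $\{\widetilde{H}_i((X,A)^{\K_m};\bk),\, i_{m*}\}$ is uniformly representation stable, which is the assertion of the corollary. There is no real obstacle here: the substantive work — the homotopy $G$-decomposition of Theorem~\ref{gppdec}, the $\bk G$-module splitting of Theorem~\ref{greppp}, and the reduction to Hemmer's theorem inside the proof of Theorem~\ref{thmrrpp} — has all been done upstream. If one wanted to make the proof self-contained rather than a pure citation, the only point deserving a sentence of care is checking in Construction~\ref{fi3} that the presence of the fixed cone vertex does not disturb stabiliser consistency, i.e. that adjoining $\{\ast\}$ to $J$ changes neither the stabiliser nor the trivial-action conclusion; this is immediate since $\Sigma_m$ fixes $\ast$ pointwise. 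Hence the corollary follows. \qed
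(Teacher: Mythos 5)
Your proposal is correct and follows exactly the paper's own route: the corollary is obtained by citing the earlier verifications that the sequences in Constructions~\ref{fi2} and~\ref{fi3} are completely surjective (the Remark after Definition~\ref{facestable}) and stabiliser consistent (the discussion after Definition~\ref{nicestable}), and then invoking Theorem~\ref{thmrrpp}. Nothing is missing, and your extra sentence about the fixed cone vertex in Construction~\ref{fi3} matches the paper's own treatment.
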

Note that since the sequence in Construction~\ref{fi3} provides a consistent sequence of finite simplicial complexes, given by taking the boundary of dual of simple polytopes, the corresponding moment-angle complexes are a sequence of manifolds.

\begin{proposition}\label{rsmanifolds}
Let $\mathcal K$ be  the consistent sequence in Construction~\ref{fi3}.  Then for the moment-angle manifolds $\mathcal Z_{\mathcal K}$, the consistent sequence of $\Sigma_m$-representations $\{H_*(\mathcal Z_{\K_m}; \bk), i_{m_*}\}$ for $\mathrm{char}\bk=0$  is uniformly representation stable.~\qed
\end{proposition}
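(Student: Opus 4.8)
The plan is to deduce the statement directly from Corollary~\ref{rsconstructions} applied to the topological pair $(X,A)=(D^2,S^1)$, with only a short additional step to pass from reduced to unreduced homology.

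First I would record that $\mathcal{Z}_{\K_m}=(D^2,S^1)^{\K_m}$ is by definition the polyhedral product of $(D^2,S^1)$ over $\K_m$, that $D^2$ is a connected, based $CW$-complex of finite type, and that $S^1$ is a based subcomplex of $D^2$; hence $(D^2,S^1)$ satisfies the hypotheses imposed on $(X,A)$ in Corollary~\ref{rsconstructions}. I would also note that each $\K_m$ is a triangulation of the sphere $S^{m-1}$ (being the boundary of the dual of the simple polytope $vc(I^m)$), so the spaces $\mathcal{Z}_{\K_m}$ are closed manifolds; this justifies the term \emph{moment-angle manifolds} but is not used in the argument. Next I would verify that the structure maps $i_m\colon \K_m\hookrightarrow\K_{m+1}$ of Construction~\ref{fi3} are genuine simplicial inclusions: the cone vertex $*$ and each $0_i,1_i$ are sent to the vertices of the same name, and a short case check on the two kinds of simplices of $\K_m$ (those lying in $S_{2m}$ and those containing $*$) shows that each simplex of $\K_m$ is a simplex of $\K_{m+1}$. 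Therefore $i_m$ induces a map of polyhedral products $(D^2,S^1)^{\K_m}\to(D^2,S^1)^{\K_{m+1}}$, and hence the maps $i_{m_*}$ on homology used in the statement.

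Since Construction~\ref{fi3} has already been shown to be completely surjective and stabiliser consistent, Corollary~\ref{rsconstructions} with $(X,A)=(D^2,S^1)$ gives that the consistent sequence $\{\widetilde{H}_i((D^2,S^1)^{\K_m};\bk),i_{m_*}\}$ is uniformly representation stable for every $i$ and every field $\bk$ of characteristic $0$. To conclude I would compare reduced and unreduced homology: for $m\geq 1$ the complex $\K_m$ is non-empty, and every piece $(D^2,S^1)^I$ of $\mathcal{Z}_{\K_m}$ contains the connected subspace $(D^2,S^1)^{\emptyset}=(S^1)^m$, so $\mathcal{Z}_{\K_m}$ is path-connected. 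Hence $H_i(\mathcal{Z}_{\K_m};\bk)=\widetilde{H}_i(\mathcal{Z}_{\K_m};\bk)$ for $i\geq 1$, while $H_0(\mathcal{Z}_{\K_m};\bk)=\bk$ with trivial $\Sigma_m$-action for all $m$, a constant sequence which is tautologically uniformly representation stable. Combining the two degree ranges yields the proposition.

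There is no substantial obstacle: the statement is a specialisation of Corollary~\ref{rsconstructions}, and the real content — that Construction~\ref{fi3} is completely surjective and stabiliser consistent, and that these properties force uniform representation stability via Hemmer's theorem — has already been established. The only points requiring attention are checking that the $i_m$ are honest simplicial inclusions, so that $i_{m_*}$ is defined without invoking the topological-monoid refinement of the preceding Remark, and the elementary reduced-versus-unreduced comparison in degree $0$.
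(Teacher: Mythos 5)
Your proposal is correct and matches the paper's intent exactly: the proposition is stated with a \qed precisely because it is the specialisation of Corollary~\ref{rsconstructions} to $(X,A)=(D^2,S^1)$ for the sequence of Construction~\ref{fi3}, whose complete surjectivity and stabiliser consistency were already verified. Your extra checks (that the $i_m$ are honest simplicial inclusions and the reduced-versus-unreduced comparison in degree~$0$) are sensible bookkeeping but do not change the argument.
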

Moreover, due to~\cite{BM, CFW}, the manifold $\mathcal Z_{K_m}$ is diffeomorphic to $\partial ((\underset{m}\prod S^3-D^{3m})\times D^2)\# \underset{j=1}{\overset{m}\#}\binom{m}{j}(S^{j+2}\times S^{3m-j-1})$. Therefore, $H_3(\mathcal Z_{K_m}; \bk)$ has Betti number $m$ which means that the sequence of moment-angle manifolds $\mathcal Z_{K_m}$ with the maps $\mathcal Z_{K_m}\longrightarrow \mathcal Z_{K_{m+1}}$ induced by simplicial maps $K_m\longrightarrow K_{m+1}$ is not homology stable.

Let $K_m=\Delta_m^k$. Since every $K_m$ is a full subcomplex of $K_{m+1}$, the moment-angle complex $\mathcal{Z}_{K_m}$ retracts off $\mathcal{Z}_{K_{m+1}}$, and the retraction map $p_m\colon \mathcal{Z}_{K_{m+1}} \longrightarrow\mathcal{Z}_{K_m}$ is $\Sigma_m$-equivariant. The uniform stability of $\Sigma_m$-representations $\{H^i(\mathcal Z_{\K_m}; \bk), p_{m}^i\}$ follows immediately.

\begin{proposition}\label{stabma}
For $i\geq 2k+3$, the sequence $\{H^i(\mathcal Z_{\K_m}; \bk), p_{m}^i\}$  of $\Sigma_m$-representations  is uniformly representation stable.
\end{proposition}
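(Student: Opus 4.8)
The plan is to verify directly the three conditions in the Church--Farb definition of uniform representation stability, starting from the $\Sigma_m$-equivariant Hochster decomposition of Proposition~\ref{grepi}. First I would pin down the $\Sigma_m$-module structure of $H^i(\mathcal{Z}_{K_m};\bk)$. Applying Proposition~\ref{grepi} with $G=\Sigma_m$ and the complex $K_m=\Delta_m^k$, and using the orbit representatives $J_l=\{1,\dots,l\}$ with stabiliser $\Sigma_l\times\Sigma_{m-l}$, one obtains
\[
H^i(\mathcal{Z}_{K_m};\bk)\;\cong\;\bigoplus_{l=0}^{m}\mathrm{Ind}_{\Sigma_l\times\Sigma_{m-l}}^{\Sigma_m}\widetilde{H}^{\,i-l-1}\big((\Delta_m^k)_{J_l};\bk\big).
\]
Now $(\Delta_m^k)_{J_l}$ is the full simplex $\Delta^{l-1}$ when $l\le k+1$ and the $k$-skeleton of $\Delta^{l-1}$ when $l\ge k+2$, whose reduced cohomology is concentrated in degree $k$ and has rank $\binom{l-1}{k+1}$. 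Hence the $l$-th summand vanishes unless $l\ge k+2$ and $i-l-1=k$, i.e. $l=n:=i-k-1$, and the hypothesis $i\ge 2k+3$ forces $n\ge k+2$. So for all $m\ge n$
\[
H^i(\mathcal{Z}_{K_m};\bk)\;\cong\;\mathrm{Ind}_{\Sigma_n\times\Sigma_{m-n}}^{\Sigma_m}\big(W\boxtimes\bk\big),\qquad W:=\widetilde{H}^{k}(\Delta_n^k;\bk),
\]
with $\Sigma_{m-n}$ acting trivially and $W$ a fixed $\Sigma_n$-representation not depending on $m$.

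From here condition~(3) (uniform multiplicity stability) is immediate: decompose $W$ into Specht modules over $\Sigma_n$ and apply Hemmer's theorem~\cite{He} to each term of the induced module; this shows that, once $H^i(\mathcal{Z}_{K_m};\bk)$ is written as a sum of irreducibles $V(\lambda)_m$, the multiplicities are eventually constant in $m$ with a threshold independent of $\lambda$. The sequence is consistent since $\mathrm{char}\,\bk=0$. For condition~(1) (injectivity) I would use that $K_m=(K_{m+1})_{[m]}$ is a full subcomplex, so the coordinate inclusion $j_m\colon\mathcal{Z}_{K_m}\hookrightarrow\mathcal{Z}_{K_{m+1}}$, $(z_1,\dots,z_m)\mapsto(z_1,\dots,z_m,1)$, is a section of $p_m$; therefore $j_m^{*}\circ p_m^{i}=\mathrm{id}$, so $p_m^{i}$ is injective, and it is $\Sigma_m$-equivariant because $p_m$ is.

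The crux is condition~(2): that the $\Sigma_{m+1}$-orbit of $p_m^{i}\big(H^i(\mathcal{Z}_{K_m};\bk)\big)$ spans $H^i(\mathcal{Z}_{K_{m+1}};\bk)$. For this I would show that $p_m^{i}$ respects the $\mathbb{Z}^m$-multigrading underlying the Hochster decomposition. The map $p_m$ is equivariant for the torus epimorphism $(S^1)^{m+1}\to(S^1)^m$ that forgets the last circle, so it induces the face-ring homomorphism $\bk[\Delta_m^k]\to\bk[\Delta_{m+1}^k]$, $v_i\mapsto v_i$, hence a $\mathbb{Z}^m$-graded map on the associated $\mathrm{Tor}$-algebras (functoriality of the cohomology of moment-angle complexes,~\cite{BP2}). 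Consequently $p_m^{i}$ sends the multidegree-$2J$ summand $V_J\cong\widetilde{H}^{\,i-|J|-1}((\Delta_m^k)_J;\bk)$ into the corresponding summand for $K_{m+1}$ whenever $J\subseteq[m]$; these summands have the same dimension $\binom{|J|-1}{k+1}$ and $p_m^{i}$ is injective, so $p_m^{i}$ identifies $V_J$ with its counterpart and $\mathrm{im}(p_m^{i})=\bigoplus_{J\subseteq[m],\,|J|=n}V_J$. Since $n\le m$, every $n$-element subset $J'$ of $[m+1]$ equals $g\cdot J$ for some $g\in\Sigma_{m+1}$ and $J\subseteq[m]$, and $\Sigma_{m+1}$ permutes the Hochster summands by $g\cdot V_J=V_{g\cdot J}$ (Proposition~\ref{grepi}); hence the $\Sigma_{m+1}$-span of $\mathrm{im}(p_m^{i})$ is all of $\bigoplus_{|J'|=n}V_{J'}=H^i(\mathcal{Z}_{K_{m+1}};\bk)$, giving condition~(2).

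I expect the multigrading compatibility of $p_m^{i}$ to be the main obstacle. It cannot be deduced from the simplicial functoriality of the Hochster isomorphism, since $p_m$ is not induced by a simplicial map, so one must argue through the torus-equivariant (Borel/Eilenberg--Moore) description of $H^*(\mathcal{Z}_K;\bk)$. Moreover the precise image of $p_m^{i}$ genuinely matters here: an arbitrary $\Sigma_m$-equivariant section of $j_m^{*}$ need not have $\Sigma_{m+1}$-span equal to $H^i(\mathcal{Z}_{K_{m+1}};\bk)$, so one really has to identify $\mathrm{im}(p_m^{i})$ with the "multigraded complement" $\bigoplus_{J\subseteq[m]}V_J$ rather than with an abstract complement of $\ker j_m^{*}$.
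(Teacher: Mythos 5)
Your proposal is correct, and its core is the same as the paper's: apply Proposition~\ref{grepi} to $K_m=\Delta_m^k$, note that in degree $i\geq 2k+3$ the only surviving Hochster summands are those with $|J|=n:=i-k-1\geq k+2$, so that $H^i(\mathcal{Z}_{K_m};\bk)\cong \mathrm{Ind}_{\Sigma_{n}\times\Sigma_{m-n}}^{\Sigma_m}\bigl(\widetilde{H}^{k}(\Delta_n^k;\bk)\boxtimes\bk\bigr)$ for $m\geq n$, and then invoke Hemmer. Where you genuinely go beyond the paper is in handling the maps: the paper stops at the isomorphism and cites Hemmer for the whole of uniform representation stability, implicitly taking $p_m^i$ to behave like the standard maps between induced modules, whereas you verify conditions (1) and (2) for the actual maps $p_m^i$ — injectivity from the section $j_m$ of $p_m$ coming from $K_m$ being a full subcomplex of $K_{m+1}$, and surjectivity by showing $p_m^i$ respects the multigrading, so that its image is $\bigoplus_{J\subseteq[m],\,|J|=n}V_J$, whose $\Sigma_{m+1}$-orbit clearly spans. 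Since Hemmer's theorem concerns a specific consistent sequence of induced representations, this matching-up (or a direct check of injectivity and surjectivity, as you do) is a real ingredient, and your argument is the more complete one. One simplification: the multigrading compatibility does not require the Borel or Eilenberg--Moore description, and your remark that it cannot be extracted from functoriality is a bit overstated. The projection $p_m$, being the restriction of the coordinate projection $(D^2)^{m+1}\to(D^2)^m$, is cellular for the standard cell structure, and on cellular chains $(p_m)_\#\kappa(L,I)=\kappa(L,I)$ when $L,I\subseteq[m]$ while $(p_m)_\#\kappa(L,I)=0$ when $m+1\in L\cup I$ (the dimension drops); dually $p_m^\#$ sends $\kappa(L,I)^*$ to the identically named cochain, so $p_m^*$ maps the multidegree-$2J$ piece isomorphically for every $J\subseteq[m]$, using $(K_m)_J=(K_{m+1})_J$. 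This yields your key claim directly at the cochain level, without passing through face rings or $\mathrm{Tor}$.
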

\begin{proof}

By Proposition~\ref{grepi}, we have 
\[
H^i(\mathcal Z_{\K_m}; \bk) \cong \underset{J\in E_m}{\bigoplus} \mathrm{Ind}_{\Sigma_{|J|}\times \Sigma_{m-|J|}}^{\Sigma_m} \widetilde {H}^{i-|J|-1}(\K_{J, m}; \bk)
\]
where $E_m=\{\{1\}, \{1, 2\}, \ldots, \{1, 2, \ldots, m\}\}$ and $\K_{J, m}=J \cap \Delta_m^k$. Thus $\K_J$ is a $(|J|-1)$-face of $\K$, if $|J|\leq k+1$ and is the $k$-skeleton of $(|J|-1)$-simplex with $J$ as its vertex set if $|J|\geq k+2$. The latter one allows an $\Sigma_{|J|}$-action. 
Therefore, if $|J| \leq k+1$, then $\widetilde{H}^*(\K_{J, m};\bk)=0$. If $k+2\leq|J|\leq m$, $\widetilde{H}^p(\K_{J, m}; \bk)=\bk$ if $p=k$, and is $0$, otherwise.

The nontrivial cohomology group of $\K_{J, m}$ implies that $i-|J|-1=k$ and $k+2\leq|J|\leq m$. Thus if $2k+3\leq i\leq m+k+1$, we have a $\Sigma_m$-representation isomorphism 
that \[
H^i(\mathcal Z_{\K_m}; \bk)\cong \mathrm{Ind}_{\Sigma_{|J|}\times \Sigma_{m-|J|}}^{\Sigma_m} \widetilde {H}^{k}(\K_{J, m}; \bk), ~\mathrm{with}~|J|=i-k-1.
\]
Hemmer~\cite{He} implies the uniform representation stability of the sequence of $\Sigma_m$-representations $\{H^i(\mathcal{Z}_{K_m}; \bk), p_{m}^i\}$. 
\end{proof}
\begin{example}
When $K_m$ consists of $m$ disjoint points and for $i\geq 3$, as a $\Sigma_m$-representation $H^i(\mathcal{Z}_{m}; \bk)$ can be written explicitly as
\[
H^i(\mathcal{Z}_{m}; \bk)=\mathrm{Ind}_{\Sigma_{i-1}\times \Sigma_{m-i+1}}^{\Sigma_m}V_{(i-2, 1)}\boxtimes \bk
\]
where $V_{(i-2, 1)}$ is the standard representation of $\Sigma_{i-1}$.

In particular,
\[
\begin{split}
H^3(\mathcal{Z}_m; \bk)=&V_{(m-1, 1)}\oplus V_{(m-2, 1, 1)}~\text{ for}~ m\geq 3;\\
H^4(\mathcal{Z}_m; \bk)=&V_{(m-1, 1)} \oplus V_{(m-2, 2)} \oplus V_{(m-2, 1, 1)} \oplus V_{(m-3, 2, 1)} ~\text{ for}~ m\geq 5; \\
H^5(\mathcal{Z}_m; \bk)=&V_{(m-1, 1)} \oplus V_{(m-2, 2)} \oplus V_{(m-2, 1, 1)} \oplus V_{(m-3, 3)}\oplus V_{(m-3,2,1)} \oplus V_{(m-4,3,1)} ~\text{ for}~ m\geq 7; \\
H^6(\mathcal{Z}_m; \bk)=&V_{(m-1, 1)} \oplus V_{(m-2, 2)} \oplus V_{(m-2, 1, 1)} \oplus V_{(m-3, 3)}\oplus V_{(m-3,2,1)} \oplus V_{(m-4,4)}\\
                         &  \oplus V_{(m-4,3,1)}  \oplus V_{(m-5,4,1)}  ~\text{ for}~ m\geq 9.\\
\end{split}
\]
\end{example}


\section{Applications of uniformly representation stability of polyhedral products}
We finish the paper by investigating what kind of structural properties of $H_i((X,A)^K;\Q)$ are implied by representation stability.

One of the key properties of a sequence of uniformly stable $\Sigma_m$-representations over $\Q$ is that their characters are eventually polynomials~\cite[Definition 1.4]{CEF}. 

Denote by $\lambda \vdash m$ a partition $\lambda=(\lambda_1, \ldots, \lambda_l)$ with $\lambda_1\geq \ldots \lambda_l >0$ and $\lambda_1 +\ldots +\lambda_l=m$. Let $|\lambda|$ be the sum $\lambda_1 +\ldots +\lambda_l$.
Given any partition $\lambda$, for any $m\geq |\lambda|+\lambda_1$, denote by $\lambda[m]=(m-|\lambda|, \lambda_1, \ldots, \lambda_l)$ (see ~\cite[Definition 2.2.5]{CEF}).
Denote by $V(\lambda)_m$ the irreducible representation corresponding to partition $\lambda[m]$.
The \emph{weight} of a consistent sequence of $\Sigma_m$-representations $\{V_m, \psi_m\}$ is the maximum of $|\lambda|$ over all irreducible constituents $V(\lambda)_m$ that appears in $V_m$. 
\begin{example} \label{weight}
For any partition $\mu \vdash n$ and $m\geq n$, applying Proposition 3.2.4 in~\cite{CEF}, the consistent sequence $\{\mathrm{Ind}_{\Sigma_{n}\times \Sigma_{m-n}}^{\Sigma_m}V_{\mu} \boxtimes \bk\}$ has weight $n$, where $V_{\mu}$ is the irreducible representation corresponding to $\mu$.
\end{example}
Hemmer~\cite{He} constructed a sequence of $\Sigma_m$-representations that is uniformly representation stable. Next we calculate the weight of this sequence applying the result from Example~\ref{weight}.

\begin{lemma}\label{hemmer}
Fix an integer $n\geq 0$. Let $H$ be a subgroup of $\Sigma_n$ and $V$ is a $\Sigma_n$-representation over a field $\bk$ of characteristic 0. For $m\geq n$, the consistent sequence $\{\mathrm{Ind}_{H\times \Sigma_{m-n}}^{\Sigma_m}V\boxtimes \bk\}$ has weight $n$.
\end{lemma}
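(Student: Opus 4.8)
The plan is to reduce the general statement to the case treated in Example~\ref{weight} by decomposing the $\Sigma_n$-representation $V$ into irreducibles and using the transitivity of induction. First I would write $V\cong\bigoplus_{\mu\vdash n}d_\mu V_\mu$ as a $\Sigma_n$-representation over $\bk$, where the $V_\mu$ are the irreducible $\Sigma_n$-representations indexed by partitions $\mu\vdash n$ and $d_\mu\geq 0$ are the multiplicities; this is possible since $\mathrm{char}\,\bk=0$. Restricting along $H\hookrightarrow\Sigma_n$ does not change the underlying vector space, so as an $(H\times\Sigma_{m-n})$-representation we still have $V\boxtimes\bk\cong\bigoplus_\mu d_\mu\,(V_\mu|_H)\boxtimes\bk$. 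Here $V_\mu|_H$ denotes the restriction of $V_\mu$ to $H$, which I regard as an $(H\times\Sigma_{m-n})$-representation with $\Sigma_{m-n}$ acting trivially.

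Next I would apply induction $\mathrm{Ind}_{H\times\Sigma_{m-n}}^{\Sigma_m}$, which is additive, to obtain
\[
\mathrm{Ind}_{H\times\Sigma_{m-n}}^{\Sigma_m}V\boxtimes\bk\cong\underset{\mu\vdash n}{\bigoplus}d_\mu\,\mathrm{Ind}_{H\times\Sigma_{m-n}}^{\Sigma_m}(V_\mu|_H)\boxtimes\bk.
\]
By transitivity of induction through the intermediate subgroup $\Sigma_n\times\Sigma_{m-n}$, for each $\mu$ we have
\[
\mathrm{Ind}_{H\times\Sigma_{m-n}}^{\Sigma_m}(V_\mu|_H)\boxtimes\bk\cong\mathrm{Ind}_{\Sigma_n\times\Sigma_{m-n}}^{\Sigma_m}\Big(\big(\mathrm{Ind}_H^{\Sigma_n}(V_\mu|_H)\big)\boxtimes\bk\Big),
\]
using that $\Sigma_{m-n}$ acts trivially throughout. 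Now $\mathrm{Ind}_H^{\Sigma_n}(V_\mu|_H)$ is a genuine $\Sigma_n$-representation, so it decomposes as $\bigoplus_{\nu\vdash n}e_{\mu\nu}V_\nu$ for some multiplicities $e_{\mu\nu}\geq 0$. Crucially, whenever $d_\mu>0$ the trivial representation $V_{(n)}$ of $\Sigma_n$ appears in $\mathrm{Ind}_H^{\Sigma_n}(V_\mu|_H)$ with positive multiplicity, since by Frobenius reciprocity $\langle\mathrm{Ind}_H^{\Sigma_n}(V_\mu|_H),\,V_{(n)}\rangle=\langle V_\mu|_H,\,\bk\rangle\geq 1$ as the restriction $V_\mu|_H$ contains the trivial $H$-representation (indeed $V_\mu|_H$ is nonzero and self-dual, or simply note $\dim(V_\mu|_H)^H\geq 1$ need not hold for arbitrary $H$—see below). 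Combining and applying Example~\ref{weight} to each summand $\mathrm{Ind}_{\Sigma_n\times\Sigma_{m-n}}^{\Sigma_m}V_\nu\boxtimes\bk$, which has weight exactly $n$, shows that every irreducible constituent of $\mathrm{Ind}_{H\times\Sigma_{m-n}}^{\Sigma_m}V\boxtimes\bk$ has the form $V(\nu)_m$ with $|\nu|=n-(\text{something})\leq n$, giving weight $\leq n$; the weight is exactly $n$ provided at least one summand $V_{(n)}$ survives, i.e. provided $V\neq 0$, since then $\mathrm{Ind}_{\Sigma_n\times\Sigma_{m-n}}^{\Sigma_m}V_{(n)}\boxtimes\bk$ contributes a constituent $V(\nu)_m$ with $|\nu|=n$.

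The main obstacle is the lower bound: showing weight $\geq n$, i.e. that some irreducible constituent $V(\nu)_m$ with $|\nu|=n$ actually occurs. The subtlety is that $\mathrm{Ind}_H^{\Sigma_n}(V_\mu|_H)$ need not contain the trivial representation $V_{(n)}$ in general—rather, by Frobenius reciprocity it contains $V_\nu$ with multiplicity $\langle V_\mu|_H, V_\nu|_H\rangle$, and one needs to know that \emph{some} $\nu\vdash n$ with $|\nu|=n$ (i.e. $\nu$ not of the form with a large first part—but note every partition of $n$ gives a padded partition with $|\nu|=n$, so actually \emph{any} $\nu\vdash n$ works) appears. Since $\mathrm{Ind}_H^{\Sigma_n}(V_\mu|_H)$ is a nonzero $\Sigma_n$-representation, it has \emph{some} irreducible constituent $V_\nu$, and automatically $|\nu|=n$ because $\nu\vdash n$. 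Hence weight $\geq n$ as soon as $V\neq 0$. I would handle the degenerate case $V=0$ (weight $-\infty$ or $0$ by convention) separately or simply assume $V\neq 0$ as is implicit. The routine verification that induction commutes with direct sums and the transitivity isomorphism respects the $\Sigma_{m-n}$-triviality I would state without belaboring.
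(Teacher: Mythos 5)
Your proof is correct and takes essentially the same route as the paper: transitivity of induction through the intermediate subgroup $\Sigma_n\times\Sigma_{m-n}$, decomposition of the resulting $\Sigma_n$-representation into irreducibles $V_\nu$ with $\nu\vdash n$, and an application of Example~\ref{weight} to each summand $\mathrm{Ind}_{\Sigma_n\times\Sigma_{m-n}}^{\Sigma_m}V_\nu\boxtimes\bk$. The preliminary decomposition of $V$ into $\Sigma_n$-irreducibles and the Frobenius-reciprocity digression about the trivial constituent (which you correctly retract, since it need not appear) are superfluous: as in the paper, any nonzero constituent $V_\nu$ already has weight exactly $n$ by the Example, which settles both the upper and lower bounds at once.
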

\begin{proof}
Observe that \[
\mathrm{Ind}_{H\times \Sigma_{m-n}}^{\Sigma_m}V\boxtimes \bk=\mathrm{Ind}_{\Sigma_n \times \Sigma_{m-n}}^{\Sigma_m}(\mathrm{Ind}_{H\times \Sigma_{m-n}}^{\Sigma_n \times \Sigma_{m-n}}(V \boxtimes \bk))=\mathrm{Ind}_{\Sigma_n \times \Sigma_{m-n}}^{\Sigma_m}(\mathrm{Ind}_{H}^{\Sigma_n}V)\boxtimes \bk.
\]
As $\Sigma_n$-representations, $\mathrm{Ind}_{H}^{\Sigma_n}V$ is decomposed as $\underset{\mu \vdash n} \bigoplus V_{\mu}^{\oplus c_{\mu}}$, where $c_{\mu}$ are multiplicities.
By Example~\ref{weight}, $\{\mathrm{Ind}_{\Sigma_{n}\times \Sigma_{m-n}}^{\Sigma_m}V_{\mu} \boxtimes \bk\}$ has weight $n$. Then $\{\mathrm{Ind}_{H\times \Sigma_{m-n}}^{\Sigma_m}V\boxtimes \bk\}$ has weight $n$, as each $\mathrm{Ind}_{H\times \Sigma_{m-n}}^{\Sigma_m}V\boxtimes \bk$ is decomposed into a finite direct sum as $\Sigma_m$-representations
\[
\mathrm{Ind}_{H\times \Sigma_{m-n}}^{\Sigma_m}V\boxtimes \bk \cong \underset{\mu \vdash n}\bigoplus c_{\mu}\mathrm{Ind}_{\Sigma_n \times \Sigma_{m-n}}^{\Sigma_m}V_{\mu}\boxtimes \bk.
\]
\end{proof}

Given a uniformly representation stable sequence $\{V_m, \psi_m\}$, the uniform multiplicity stability implies that there exists an integer $M\geq 0$ such that $V_m$ is decomposed into $\underset{\lambda}\bigoplus c_{\lambda}V(\lambda)_m$ for $m\geq M$. A classical result~(\cite[Example I.7.14]{Mac}) in representation theory states that the character of $V(\lambda)_m$ is polynomial if $m\geq |\lambda|+\lambda_1$. 
Explicitly, let $a_1, a_2, \ldots$ be class functions $a_j\colon\Sigma_i\longrightarrow\mathbb{N}$ for any $i\geq 0$ such that  $a_j(g)$ is the number of $j$-cycles in  the cycle decomposition of $g$. Then,
for each partition $\lambda$ there exists a polynomial $P_{\lambda}\in \Q[a_1, a_2, \ldots]$, called the character polynomial corresponding to the partition $\lambda$, such that $P_{\lambda}$ has degree 
$|\lambda|$ and the character $\chi_{V(\lambda)_m}(g)=P_{\lambda}(g)$ for all $m\geq |\lambda|+\lambda_1$ and $g\in \Sigma_m$.

We finish our paper by looking at the growth of Betti numbers of polyhedral products.

\begin{theorem}\label{Betti}
Let $\{K, i_m\}$ and $(X, A)$ be as in Theorem $\ref{thmrrpp}$. Then for each $i\geq 0$, the consistent sequence $\{\widetilde{H}_i((X, A)^{\K_m}; \Q), i_{m_*}\}$ has a finite weight. Moreover,
the growth of Betti numbers of $\{\widetilde{H}_i((X, A)^{\K_m}; \Q), i_{m_*}\}$ is eventually polynomial with respect to $m$.
\end{theorem}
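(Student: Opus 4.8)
The plan is to combine Theorem~\ref{thmrrpp} with the weight computation of Lemma~\ref{hemmer} and the eventual-polynomiality of character polynomials recalled just above. First I would fix $i\geq 0$ and invoke the decomposition~\eqref{rrpp} from the proof of Theorem~\ref{thmrrpp}, namely
\[
\widetilde{H}_i((X, A)^{\K_m}; \Q)\cong \underset{J\in E_m}{\bigoplus}\mathrm{Ind}_{\stab(J, m)}^{\Sigma_m} \widetilde{H}_i((X, A)^{\wedge \K_{m, J}}; \Q),
\]
together with the observation made there that only the summands with $|J|\leq i$ contribute, that these summands are independent of $m$ for $m$ large (complete surjectivity), and that each stabiliser $\stab(J, m)$ is of the form $\stab(J, b(J))\times \Sigma_{m-b(J)}$ with the complementary factor acting trivially on the relevant homology (stabiliser consistency). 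So for $m$ sufficiently large the sequence is a finite direct sum of sequences of the shape $\{\mathrm{Ind}_{H\times\Sigma_{m-n}}^{\Sigma_m} V\boxtimes\Q\}$ with $H\leq\Sigma_n$, $n\leq i$, and $V$ a fixed finite-dimensional $H$-representation.

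Next I would address the finite weight claim. By Lemma~\ref{hemmer}, each such summand $\{\mathrm{Ind}_{H\times\Sigma_{m-n}}^{\Sigma_m} V\boxtimes\Q\}$ has weight $n\leq i$; since weight is the maximum of $|\lambda|$ over irreducible constituents and a finite direct sum of sequences of weight $\leq i$ again has weight $\leq i$, the sequence $\{\widetilde{H}_i((X, A)^{\K_m}; \Q), i_{m_*}\}$ has weight at most $i$, in particular finite. (One should also note the trivial-action summands contribute only the trivial representation, of weight $0$, so they do not affect this bound.)

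For the Betti number statement I would argue as follows. By Theorem~\ref{thmrrpp} the sequence is uniformly representation stable, so there is $M$ with $\widetilde{H}_i((X, A)^{\K_m}; \Q)\cong\bigoplus_{\lambda} c_{\lambda}V(\lambda)_m$ for all $m\geq M$, where the (finitely many) $\lambda$ occurring all satisfy $|\lambda|\leq i$ by the weight bound, and the $c_\lambda$ are independent of $m$. The Betti number is $\dim_{\Q}\widetilde{H}_i((X, A)^{\K_m}; \Q)=\chi_{\widetilde{H}_i((X, A)^{\K_m};\Q)}(e)=\sum_\lambda c_\lambda\,\chi_{V(\lambda)_m}(e)=\sum_\lambda c_\lambda\,P_\lambda(e)$, evaluating each character polynomial at the identity element $e\in\Sigma_m$, which amounts to setting $a_1=m$ and $a_j=0$ for $j\geq 2$. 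Since $P_\lambda\in\Q[a_1,a_2,\ldots]$ has degree $|\lambda|\leq i$, the function $m\mapsto P_\lambda(e)$ is a polynomial in $m$ of degree at most $i$, valid once $m\geq|\lambda|+\lambda_1$; taking $m$ larger than $M$ and than all the finitely many thresholds $|\lambda|+\lambda_1$, the Betti number equals the fixed polynomial $\sum_\lambda c_\lambda P_\lambda(e)$ in $m$, so the growth is eventually polynomial.

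The main obstacle is bookkeeping rather than conceptual: one must make sure the two separate stabilisation thresholds — the one from complete surjectivity (after which the set $E_{m,i}$ of contributing orbit representatives stabilises) and the one from stabiliser consistency (the $b(J)$ for each of the finitely many $J$) — are combined with the character-polynomial threshold $\max_\lambda(|\lambda|+\lambda_1)$ and the multiplicity-stability threshold $M$ into a single $N$ beyond which everything holds simultaneously, and to confirm that passing to $\Q$ causes no difficulty since $\Q$ has characteristic $0$ as required throughout. The only mild subtlety is that the isomorphisms in~\eqref{rrpp} must be $\Sigma_m$-equivariant and compatible with $i_{m_*}$, which is already guaranteed by Theorem~\ref{greppp} and the consistency hypotheses, so nothing new is needed there.
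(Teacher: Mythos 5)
Your argument is correct, and for the second (Betti number) claim it is the same as the paper's: use uniform multiplicity stability from Theorem~\ref{thmrrpp} to write $\widetilde{H}_i((X,A)^{\K_m};\Q)\cong\bigoplus_\lambda c_\lambda V(\lambda)_m$ with constant multiplicities, then evaluate the character polynomials $P_\lambda$ at the identity (i.e.\ $a_1=m$, $a_j=0$ for $j\geq 2$), so the Betti number is eventually a polynomial in $m$ of degree at most the weight. Where you diverge is the finiteness of the weight: the paper gets it almost for free, observing that once the decomposition stabilises at stage $N$ the weight $\omega_i$ is the largest $|\lambda|$ occurring in the finite-dimensional space $\widetilde{H}_i((X,A)^{K_N};\Q)$, hence finite; you instead go back to the decomposition~\eqref{rrpp} and apply Lemma~\ref{hemmer} summand by summand, which buys a quantitative bound rather than bare finiteness. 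One caution on that bound: in Lemma~\ref{hemmer} the weight of $\{\mathrm{Ind}_{H\times\Sigma_{m-n}}^{\Sigma_m}V\boxtimes\Q\}$ is the $n$ with $H\leq\Sigma_n$, and in~\eqref{rrpp} this $n$ is $b(J)$ (since $\stab(J,m)=\stab(J,b(J))\times\Sigma_{m-b(J)}$), not $|J|$; Definition~\ref{nicestable} does not force $b(J)\leq|J|$, so your claimed bound ``weight $\leq i$'' is only guaranteed when $b(J)\leq |J|$, as happens in Constructions~\ref{fi2} and~\ref{fi3}. This does not affect the theorem: only finitely many orbit representatives $J$ with $|J|\leq i$ contribute, each giving a summand of finite weight $b(J)$, so the weight is finite in any case and your conclusion stands; the paper's route is simply shorter, while yours makes the degree of the eventual polynomial explicit in the cases where $b(J)$ is controlled.
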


\begin{proof}
 By Theorem~\ref{thmrrpp}, $\{\widetilde{H}_i((X, A)^{\K_m}; \Q), i_{m_*}\}$ is uniformly representation stable.
Thus the uniformly multiplicity stability implies that there exists an integer $N>0$, not depending on $\lambda$, such that for all $m\geq N$, there are constant integers $c_{\lambda}$ such that 
\[
\widetilde{H}_i((X, A)^{\K_m}; \Q)\cong \underset{\lambda}\bigoplus c_{\lambda}V(\lambda)_m
\]
and are uniquely given by multiplicities defined in the irreducible components of $\widetilde{H}_i((X, A)^{K_N}; \Q)$.
Therefore, the weight $\omega_i$ of sequence $\{\widetilde{H}_i((X, A)^{\K_m}; \Q), i_{m_*}\}$ is the maximum $|\lambda|$ that forms a irreducible component of $\widetilde{H}_i((X, A)^{K_N}; \Q)$. Since $\widetilde{H}_i((X, A)^{K_N}; \Q)$ has finite dimension over $\Q$, 
$\omega_i$ is finite.

In particular, if $m\geq 2 \omega_i$, then for all $\lambda$ appearing in the above equation, $m\geq |\lambda|+\lambda_1$. Then there exists a polynomial character of $\{\widetilde{H}_i((X, A)^{\K_m}; \Q)$ given by $\sum_{\lambda} P_{\lambda}$.
Take $g$ to be the identity of symmetric groups. This gives that the growth of Betti numbers of $\{\widetilde{H}_i((X, A)^{\K_m}; \Q), i_{m_*}\}$ is eventually polynomial with respect to $m$.
\end{proof}

{\it Acknowledgements.} The first author shows her gratitude to Benson Farb who introduced her to the subject during YTM in Stockholm in 2017. The authors also would like to thank the referee for pointing out the misleading points in the previous version and helpful comments to improve this paper.

\bibliographystyle{amsplain}
\bibliography{nummeth}

\end{document}